\numberwithin{equation}{section}
\theoremstyle{plain}
\newtheorem{theorem}{Theorem}[section]
\newtheorem{proposition}[theorem]{Proposition}
\newtheorem{lemma}[theorem]{Lemma}
\newtheorem{corollary}[theorem]{Corollary}
  \theoremstyle{remark}
\newtheorem{remark}[theorem]{Remark}
  \theoremstyle{definition}
\def\R{\mathbb{R}}
\def\N{\mathbb{N}}
\begin{document}
\subjclass[2010]{35A15, 35A16, 35B07, 35J15}

\keywords{Supercritical elliptic equations, Variational and topological methods, Invariant cones, High Morse index solutions, Axially symmetric solutions}

\title[]{A supercritical elliptic equation in the annulus}

\author[A. Boscaggin]{Alberto Boscaggin}
\address{Alberto Boscaggin\newline\indent
Dipartimento di Matematica
\newline\indent
Università di Torino
\newline\indent
via Carlo Alberto 10, 10123 Torino, Italy}
\email{alberto.boscaggin@unito.it}

\author[F. Colasuonno]{Francesca Colasuonno}
\address{Francesca Colasuonno\newline\indent
Dipartimento di Matematica
\newline\indent
Università di Bologna
\newline\indent
p.zza di Porta San Donato 5, 40126 Bologna, Italy}
\email{francesca.colasuonno@unibo.it}

\author[B. Noris]{Benedetta Noris}
\address{Benedetta Noris \newline \indent 
Dipartimento di Matematica \newline\indent
Politecnico di Milano \newline\indent
p.zza Leonardo da Vinci 32, 20133 Milano, Italy}
\email{benedetta.noris@polimi.it}

\author[T. Weth]{Tobias Weth}
\address{Tobias Weth\newline \indent
Institut f\"ur Mathematik\newline \indent
Goethe-Universit\"at Frankfurt\newline \indent
Robert-Mayer-Str. 10, D-60629 Frankfurt am Main, Germany}
\email{weth@math.uni-frankfurt.de}


\begin{abstract} 
By a combination of variational and topological techniques in the presence of invariant cones, we detect a new type of positive axially symmetric solutions of the Dirichlet problem for the elliptic equation
$$
-\Delta u + u = a(x)|u|^{p-2}u
$$
in an annulus $A \subset \R^N$ ($N\ge3$). Here $p>2$ is allowed to be supercritical and $a(x)$ is an axially symmetric but possibly nonradial function with additional symmetry and monotonicity properties, which are shared by the solution $u$ we construct. 
In the case where $a$ equals a positive constant, we detect conditions,  only depending on the exponent $p$ and on the inner radius of the annulus, that ensure that the solution is nonradial.
\end{abstract}

\maketitle

\section{Introduction}

In the present paper we are concerned with the nonlinear elliptic equation
\begin{equation}
  \label{eq:supercritical-general}
-\Delta u + u = a(x)|u|^{p-2}u
\end{equation}
in a subset of $\R^N$,
in the case where $N \ge 3$,  $x \mapsto a(x)$ is a positive weight function,  and the nonlinearity is (possibly) supercritical, i.e., $p > 2^*:=2N/(N-2)$. 
In the supercritical regime, a major obstruction to the search of solutions of (\ref{eq:supercritical-general}) is the lack of embeddings of the Sobolev space $H^1(\R^N)$ into the integrability space $L^p(\R^N)$.  As a consequence,  the equation (\ref{eq:supercritical-general}) does not admit a variational framework in $H^1(\R^N)$.
The same is true for the Dirichlet and Neumann problem for (\ref{eq:supercritical-general}) in a bounded domain $\Omega \subset \R^N$, as neither $H^1(\Omega)$ nor $H^1_0(\Omega)$ is embedded in $L^p(\Omega)$ if $p>2^*$. Hence standard variational methods do not apply in these cases
and, more in general, compactness issues have to be faced.
Incidentally, let us recall that, due to the Pohozaev identity, the Dirichlet problem for (\ref{eq:supercritical-general}) does not admit nontrivial solutions in a bounded star-shaped domain $\Omega$ if $p\ge 2^*$ and $a$ is positive constant weight function, see e.g. \cite{Willem}. On the other hand, while no obstruction for the solvability of the Dirichlet problem for (\ref{eq:supercritical-general}) is known in the case of topologically nontrivial domains, 
few results in the literature deal with this problem in the full supercritical regime. 

In the present paper we wish to show that the combination of variational and topological methods in the spaces $H^1_0(\Omega)$ and $C^1_0(\Omega)$ can yield existence of positive solutions of the Dirichlet problem for (\ref{eq:supercritical-general}), in the case where a cone of functions with suitable invariance properties can be found. The presence of invariant cones, characterized by monotonicity properties of functions, has already been exploited in \cite{BNW,ST,CM} to construct solutions of the Neumann problem for the supercritical equation (\ref{eq:supercritical-general}) in specific domains, see also \cite{ABF-ESAIM,ABF-PRE,CN} for related results. More precisely, in \cite{BNW,ST,ABF-ESAIM,ABF-PRE,CN} the case of a ball $\Omega \subset \R^N$ and a radial and radially increasing function $a$ is considered, while \cite{CM} is devoted to domains given as a product of lower-dimensional balls and a function $a$ with associated symmetry and monotonicity properties. A key difference between these papers dealing with the Neumann problem and our present work is that, by construction, the solutions found in
the cited articles attain their maximum on the boundary of the underlying domain, which cannot be realized for the corresponding Dirichlet problem.  
The fact that we work with solutions that are not radial introduces substantial additional difficulties to the problem under consideration. Our strategy to overcome these obstacles is to combine some of the variational techniques used in \cite{BNW,CN} with a non-variational approach inspired from \cite{B2001}.  We consider this fruitful interaction between different tools of the analysis to be one of the most relevant  achievements of the present work and we expect that it may lead in the future to different applications.
For further existence results for supercritical Neumann problems we refer to \cite{dPMP,dPPV}.  

In this paper we focus on the problem 
\begin{equation}\label{P}
  \left\{
  \begin{aligned}
  -\Delta u + u &= a(x)u^{p-1}
&&\qquad \mbox{in }A,\\
u&>0 &&\qquad \mbox{ in }A,\\
u&=0 &&\qquad \mbox{on }\partial A,
  \end{aligned}
\right.
\end{equation}
where $A$ is a bounded $N$-dimensional annulus $A :=\{x\in\mathbb{R}^N\,:\, R_0<|x|<R_1\}$ with $N\ge 3$ (here, $0<R_0 <R_1<\infty$). As it is well known, the existence of a radial solution to \eqref{P} can be easily proved, for any $p > 2$, assuming $a$ is a radial positive bounded function. Our aim is to investigate problem \eqref{P} for a certain class of possibly nonradial but axially symmetric weight functions $a$. We point out that the restriction to axially symmetric functions alone does not help to overcome the lack of a variational structure and compactness properties in the supercritical case, since axially symmetric functions may concentrate on the symmetry axis which has a nonempty intersection with the annulus $A$.  
Existence and multiplicity results for problems similar to (\ref{P}) have been obtained in \cite{gladiali-et-al2010} by means of bifurcation techniques,  being the annulus fixed and the exponent $p$ the bifurcation parameter,  and in \cite{ByeonKimPistoia2013} relying on the Lyapunov--Schmidt reduction argument, in the case of expanding annuli with fixed width.
In the very recent preprint \cite{CowanMoameni2021}, the authors impose the same monotonicity properties that we have; they take advantage of the invariant cone by applying a convex analysis approach and by working in the dual space.
We also wish to mention that relevant existence results for specific related critical and slightly supercritical Dirichlet problems can be found in \cite{BC,dPW,P,P2}. 

In order to state our result precisely,  let us introduce some notation. Assuming without loss of generality that the axis of symmetry is the $x_N$-axis, we call a function on $\overline{A}$ axially symmetric if it only depends on
\[
r=|x|\in [R_0,R_1] \qquad \text{and}\qquad \theta=\arcsin\left(\frac{x_N}{r}\right)\in \left[-\frac{\pi}{2},\frac{\pi}{2}\right].
\]
Hence every axially symmetric function $u$ on $\overline{A}$ can be written as 
$$
u(x) = \mathfrak{u}\left(|x|,\arcsin\left(x_N/r\right)\right)\qquad \text{with a function $\mathfrak{u}: [R_0,R_1]\times\left[-\frac{\pi}{2},\frac{\pi}{2}\right] \to \mathbb{R}$.}
$$
To describe further related symmetry and monotonocity properties, we introduce the cone
\begin{equation}\label{cone}
\widehat{\mathcal K}:= \left\{u\in C^1(\overline A)\,:\, \begin{aligned} & u=\mathfrak{u}(r,\theta), \,u\ge 0 \mbox{ in } \overline{A},\\
&\mathfrak{u}(r,\theta)=\mathfrak{u}(r,-\theta) \mbox{ in }[R_0,R_1]\times(0,\pi/2) ,\\ 
& \mathfrak{u}_\theta(r,\theta)\le 0 \mbox{ in }[R_0,R_1]\times(0,\pi/2). 
\end{aligned}
\right\},
\end{equation}
where $\mathfrak{u}_\theta$ stands for the partial derivative with respect to the variable $\theta$.
Notice that a function $u\in\widehat{\mathcal K}$ satisfies also $\mathfrak{u}_\theta(r,\theta)\ge 0 \mbox{ in } [R_0,R_1]\times(-\pi/2,0).$ We also set
\begin{equation}\label{eq:K_tilde_def}
  \mathcal K := \{u \in \widehat{\mathcal{K}} \::\: u \big|_{\partial A} \equiv 0 \}.
\end{equation}
Hence $\mathcal K$ is the intersection of $\widehat{\mathcal{K}}$ with the function space 
\begin{equation}\label{eq:C10A_def}
C^1_0(A):= \{u \in C^1(\overline A)\::\: u\big|_{\partial A}\equiv 0\} \subset H^1_0(A),
\end{equation}
which will play a central role in the variational approach we propose in this paper. With this notation, we assume that
\begin{equation}\label{a_assumptions}
a\in \widehat{\mathcal{K}}, \qquad a> 0 \quad \text{in } \overline{A},
\end{equation}
and we will show that \eqref{P} admits a nontrivial solution belonging to $\mathcal K$ and enjoying a suitable minimality property. To describe it precisely, we define the functional 
\begin{equation}\label{eq:I_def}
I:C^1_0(A)\to\mathbb R, \qquad I(u):=\frac{1}{2}\int_A(|\nabla u|^2+u^2)dx-\frac{1}{p}\int_A a(x)|u|^p dx,
\end{equation}
which is well known to be well-defined and of class $C^2$, since $p > 2$.
Within the cone $\mathcal K$, we also consider the following Nehari-type set
\[
\mathcal N_\mathcal K:= \{u \in \mathcal{K}\::\: u \not\equiv 0, \,I'(u)u=0\}
\]
and the Nehari value
\begin{equation}\label{eq:Ic-equivalent}
c_I := \inf_{u \in {\mathcal N}_{\mathcal{K}}} I(u).
\end{equation}

With this notation, our first main result now reads as follows.

\begin{theorem}\label{thm:main_existence}
  Let $N\geq3$, $p>2$, and suppose that the function $a$ satisfies (\ref{a_assumptions}). Then we have $c_I >0$, and $c_I$ is attained in ${\mathcal N}_{\mathcal{K}}$. Moreover, every minimizer $u \in \mathcal N_\mathcal{K}$ of $I\big|_{\mathcal N_\mathcal{K}}$ is a nontrivial solution of \eqref{P} belonging to $\mathcal{K}$.
\end{theorem}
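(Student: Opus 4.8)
The plan is to run a constrained minimization on the Nehari-type set $\mathcal N_{\mathcal K}$ inside the cone $\mathcal K$, and then to argue that a minimizer is automatically a free critical point of $I$, hence a classical solution. The first point is that $\mathcal K$ is a closed convex cone in $C^1_0(A)$ which is also weakly closed as a subset of $H^1_0(A)$: indeed the constraints defining $\widehat{\mathcal K}$ (nonnegativity, the reflection symmetry $\mathfrak u(r,\theta)=\mathfrak u(r,-\theta)$, and the monotonicity $\mathfrak u_\theta \le 0$ on $(0,\pi/2)$) all pass to weak $H^1$ limits after rewriting them as families of linear inequalities or equalities tested against nonnegative functions. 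The crucial gain of working in $\mathcal K$ is a compact embedding: axially symmetric functions that are monotone in $\theta$ cannot concentrate on the symmetry axis, so $\mathcal K \cap H^1_0(A)$ embeds compactly into $L^p(A)$ for every $p>2$. I would isolate this as the key lemma — quantitatively, the monotonicity in $\theta$ together with the symmetry forces pointwise control of $u$ near the axis by a suitable average of $u$ over a spherical cap, which yields an $L^\infty$- or $L^q$-type bound off a neighborhood of the axis and an $L^p$ bound near it, uniformly on $H^1$-bounded sets.

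Granting the compact embedding, the minimization is routine. For $0 \ne u \in \mathcal K$, since $a>0$ on $\overline A$, the fibering map $t\mapsto I(tu)$ has a unique positive critical point $t(u)>0$ (as $p>2$), so $\mathcal N_{\mathcal K}\ne\emptyset$ and on it $I(u) = (\tfrac12-\tfrac1p)\int_A(|\nabla u|^2+u^2)\,dx$. A minimizing sequence $(u_n)\subset\mathcal N_{\mathcal K}$ is therefore bounded in $H^1_0(A)$; passing to a subsequence, $u_n\rightharpoonup u$ weakly in $H^1_0(A)$ and strongly in $L^p(A)$ by the compact embedding, with $u\in\mathcal K$ by weak closedness. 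Strong $L^p$ convergence gives $\int_A a|u_n|^p \to \int_A a|u|^p$; combined with $I'(u_n)u_n=0$ this forces $u\ne 0$ (the $L^p$-norms are bounded below on $\mathcal N_{\mathcal K}$) and, after projecting $u$ onto $\mathcal N_{\mathcal K}$ via its fiber and using weak lower semicontinuity of the $H^1$-norm, $I(u)\le c_I$, so $u$ attains $c_I$ and $c_I>0$.

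The last and genuinely delicate step is to show that a minimizer $u$ over the constrained set $\mathcal N_{\mathcal K}$ actually solves \eqref{P}, i.e. that the cone constraint is not active. Here I would use the variational inequality coming from minimality together with the structure of $\mathcal K$. The standard mechanism: for any $\varphi\in\mathcal K$ the convex combinations $(1-s)u+s\varphi$ lie in $\mathcal K$, and after reprojecting onto $\mathcal N_{\mathcal K}$ minimality yields $I'(u)(\varphi-u)\ge 0$ for all $\varphi\in\mathcal K$; since $u\in\mathcal N_{\mathcal K}$ this reduces to $I'(u)\varphi\ge 0$ for all $\varphi\in\mathcal K$. Because $\mathcal K$ is a cone with nonempty "interior directions," one then wants to upgrade this to $I'(u)\varphi = 0$ for all $\varphi\in C^1_0(A)$. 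The way to do this is to interpret $I'(u)$ as (a multiple of) the weak formulation of $-\Delta u + u - a|u|^{p-2}u$ and to exploit the symmetry and monotonicity invariance: the differential operator $-\Delta + 1$, the weight $a\in\widehat{\mathcal K}$, and the nonlinearity all respect the $\theta\mapsto-\theta$ symmetry and are compatible with the foliation by the angle, so the "normal cone" to $\mathcal K$ at $u$ consists only of functions that are themselves symmetric and monotone in a dual sense; testing the inequality against both $\varphi$ and its reflections/rearrangements (which stay in $\mathcal K$), one eliminates the one-sided part. Concretely I expect to test $I'(u)\varphi\ge0$ with $\varphi$ ranging over a spanning family (e.g. $\varphi$ and functions built from $u$ itself and from symmetrized bumps) to conclude that the distribution $-\Delta u+u-a u^{p-1}$ vanishes; elliptic regularity then gives $u\in C^1(\overline A)$ a fortiori, and the strong maximum principle upgrades $u\ge 0$, $u\not\equiv 0$ to $u>0$ in $A$, so $u$ solves \eqref{P} and $u\in\mathcal K$. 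The main obstacle is precisely this decoupling argument — showing the monotonicity constraint $\mathfrak u_\theta\le 0$ is inactive at the minimizer — and I would expect the proof to hinge on a careful use of the invariance of the full nonlinear operator under the cone's defining symmetries, possibly together with a strong maximum principle applied to $\mathfrak u_\theta$ itself (which satisfies a linear elliptic inequality) to show it is either $\equiv 0$ or strictly negative in the open region, the former case being excluded or handled directly.
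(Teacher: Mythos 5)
Your overall architecture (minimize on the Nehari set in the cone, then show the constraint is inactive) is reasonable, and your key lemma --- an $L^q$-bound/compactness for $H^1$-bounded cone functions --- is indeed the engine of the paper (Lemma~\ref{le:est2}, proved there via a trace inequality and a weighted fractional Sobolev embedding for radial functions rather than your cap-averaging heuristic). But there are two genuine gaps. First, the attainment of $c_I$: a minimizing sequence in $\mathcal N_{\mathcal K}$ converges weakly only to some $u$ in the $H^1$-cone $\widetilde{\mathcal K}$, and ``$u\in\mathcal K$ by weak closedness'' is not justified, because $\mathcal K=\widetilde{\mathcal K}\cap C^1_0(A)$ and no $C^1$-regularity of the weak limit is available before you know it solves the equation; so the limit need not lie in $\mathcal N_{\mathcal K}$ at all. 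The paper avoids this by a completely different route: it runs a descent flow for $\mathrm{Id}-T$, $T(u)=(-\Delta+\mathrm{Id})^{-1}(a|u|^{p-2}u)$, in $C^1_0(A)$, produces solutions as fixed points of the compact operator $T$ via a dynamical-systems argument on the boundary of a domain of attraction, and only then gets attainment by applying this to a minimizing sequence and using the $C^1$-compactness of $T$ (Proposition~\ref{prop:T_compact}).

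Second, your ``decoupling'' step is the crux and, as proposed, does not work. Testing $I'(u)\varphi\ge 0$ against ``reflections/rearrangements'' of $\varphi$ fails because the $\theta$-reflection of a cone element is monotone \emph{increasing} in $\theta$ on $(0,\pi/2)$ and hence leaves $\mathcal K$; and invoking a maximum principle for $\mathfrak u_\theta$ is circular, since $\mathfrak u_\theta$ satisfies a linear elliptic equation only after you know $u$ solves the PDE. The missing idea is the invariance $T(\mathcal K)\subset\mathcal K$, which the paper establishes in Lemma~\ref{le:invarianceK} by differentiating the \emph{linear} problem in $\theta$ and applying a maximum principle with a singular potential. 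With that lemma in hand, your variational inequality does finish quickly: since $u\in\mathcal N_{\mathcal K}$ gives $\langle u-T(u),u\rangle_{H^1}=0$, choosing the admissible test function $\varphi=T(u)\in\mathcal K$ yields $0\le I'(u)T(u)=\langle u-T(u),T(u)\rangle_{H^1}=-\|u-T(u)\|_{H^1}^2$, hence $u=T(u)$, and elliptic regularity plus the strong maximum principle give $u\in\mathcal K$, $u>0$. So your instinct that everything hinges on the compatibility of $-\Delta+1$ and the nonlinearity with the cone is right, but the concrete device (cone invariance of the resolvent operator, or equivalently of the flow, which the paper uses throughout) is absent from your argument, and without it the one-sided inequality cannot be upgraded to $I'(u)=0$.
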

Hereafter we will call every minimizer $u \in \mathcal N_\mathcal{K}$ of $I\big|_{\mathcal N_\mathcal{K}}$ a {\em $\mathcal{K}$-ground state solution of (\ref{P})}.

In the case where $a$ is a nonradial function, every solution of (\ref{P}) is nonradial, and Theorem~\ref{thm:main_existence} yields a new existence result for such solutions in the case of critical or supercritical exponents $p>2$.

Let us say a few words about the proof of Theorem \ref{thm:main_existence}. 
The starting point of our method is inspired by the papers \cite{BNW,ST}, 
that concern radial solutions to the Neumann problem for equation (\ref{eq:supercritical-general}) in a ball (see also \cite{ABF-ESAIM,ABF-PRE,CN,CM}). 
As shown in \cite{BNW,ST},  in the case of Neumann boundary conditions, restricting the attention to radial and radially increasing functions provides a priori bounds that are sufficient to overcome the lack of a global variational structure and the lack of compactness.  As a consequence, in the cited papers, one can prove the existence of solutions by restricting variational arguments to the cone of positive radial and radially increasing functions.
In a similar spirit, we will exploit here the fact that the functions in the cone $\mathcal K$ enjoy some a priori bounds.  We stress that the present nonradial framework requires major modifications of the approach: not only the a priori bounds are more difficult to obtain,  but also, being weaker, we are not allowed to proceed by restricting variational arguments to the cone. At this point, what we consider the most interesting part of the paper comes into play: we combine the mountain pass theorem restricted to the cone with a non-variational approach inspired from \cite{B2001}. 

Let us explain our argument in more detail.  We first notice that the minimum value $c_I$ can be reinterpreted as a minimax value of mountain pass-type, cf. Lemma \ref{le:cI=dI}.
Hence, we develop a mountain pass-type argument for the functional $I$ in the cone $\mathcal{K}$, by replacing the usual gradient flow with 
$$
\frac{d}{dt} \eta(t,u) = - (\textnormal{Id} - T)(\eta(t,u)), 
$$
where $T: C^1_0(A) \to C^1_0(A)$ is the operator defined by 
\[
T(u) := (-\Delta + \textnormal{Id})^{-1}(a(x)|u|^{p-2}u).
\]
Notice that, with this approach, solutions will be provided as fixed points of the operator $T$.
A major difficulty lies in the fact that the functional $I$ is of class $C^2$ in the space $C^1_0(A)$, whereas the compactness properties are available with respect to the $H^1(A)$-norm
(see the Palais-Smale type condition proved in Lemma \ref{lemma:PS}).
In order to overcome this obstacle, we adopt a dynamical system point of view, partially inspired from \cite{B2001}. More precisely, via the descent flow,  we manage to construct a sequence belonging to the boundary of a certain domain of attraction, that converges to a fixed point of $T$, that is a solution of the problem.

\medskip

 In the following, we wish to discuss the important special case where $a$ is a constant function. In this case we may, by renormalization, assume that $a \equiv 1$. Then (\ref{P}) reduces to the problem
\begin{equation}\label{P-const}
\begin{cases}
-\Delta u + u = |u|^{p-2}u\quad&\mbox{in }A,\\
u>0 &\mbox{ in }A,\\
u=0&\mbox{on }\partial A
\end{cases}
\end{equation}
which has received widespread attention with regard to the existence and shape of radial and nonradial solutions. 
As already observed, a radial solution exists for any $p > 2$ and, by \cite{T}, it is unique. 
We recall that, when the domain is a ball, the celebrated symmetry result by Gidas-Ni-Nirenberg \cite{GNN} ensures that every positive regular solution is radial. This symmerty preservation still holds when the domain $A$ is an annulus with a small hole and the nonlinearity is subcritical, see \cite{GPY}. On the other hand, for expanding annuli with fixed difference of radii, the existence of multiple nonradial solutions has been proved in \cite{Coffman} in the two-dimensional case,  in \cite{B,CW} for $N\ge 3$ in the subcritical regime 
and in \cite{ByeonKimPistoia2013} by means of Lyapunov--Schmidt reduction. 
For some supercritical nonlinearities, yet subject to certain growth conditions at infinity, the existence of nonradial solutions has been obtained in \cite{Li}. Other existence results for nonradial solutions of \eqref{P-const} in the supercritical case are obtained in \cite{gladiali-et-al2010} via bifurcation techniques.

When applying Theorem~\ref{thm:main_existence} to equation \eqref{P-const}, a priori it is not clear whether the solution found is radial or not.
We detect a condition sufficient to ensure that the $\mathcal{K}$-ground state solution of \eqref{P} is not radial, namely
\begin{equation}\label{eq:suff_p}
p \ge 2+ \frac{2N}{\bigl(\frac{N-2}{2}\bigr)^2+ R_0^2},
\end{equation}
$R_0$ being the inner radius of the annulus. More precisely,  we prove the following result concerning problem \eqref{P-const}.

\begin{theorem}\label{thm:main-a-const}
Let $N\geq3$ and relation \eqref{eq:suff_p} hold.  
Then every $\mathcal{K}$-ground state solution of (\ref{P-const}) is nonradial.
\end{theorem}  

 Remarkably,  condition \eqref{eq:suff_p} only involves $p$ and $R_0$, meaning that our existence result holds for any outer radius $R_1 > R_0$. In particular, if $0<R_0<R_1$ are given, we obtain a nonradial solution of (\ref{P-const}) if $p \ge 2+ \frac{2N}{\bigl(\frac{N-2}{2}\bigr)^2+ R_0^2}$.  As a consequence, we also deduce the following.

\begin{corollary}
\label{cor-any-annulus}  
  Let $N\geq3$ and suppose that $p> 2 + \frac{8N}{(N-2)^2}$. Then, for any annulus $A :=\{x\in\mathbb{R}^N\,:\, R_0<|x|<R_1\}$ with arbitrary $0<R_0<R_1$, every $\mathcal{K}$-ground state solution of (\ref{P-const}) is nonradial. Hence (\ref{P-const}) admits a nonradial solution on any annulus in this case.   
\end{corollary}  

If $p>2$ is given, then \eqref{eq:suff_p} amounts to the explicit condition
  $$
  R_0^2 \ge \frac{2N}{p-2}- \left(\frac{N-2}{2}\right)^2
  $$
  on the inner radius which guarantees the existence of a nonradial solution of (\ref{P-const}).
  In this regard,  we improve the existence results in \cite{ByeonKimPistoia2013,gladiali-et-al2010} both by providing a quantitative relation between $p$ and $R_0$ sufficient for the existence of nonradial solutions, and by proving the existence of nonradial solutions regardless of $R_1$.

Theorem~\ref{thm:main-a-const} also complements a result in \cite{gladiali-et-al2010} on local bifurcation of nonradial solutions for problem (\ref{P-const}). More precisely, it is shown in \cite[Theorem 1.7]{gladiali-et-al2010} that there exists an ordered sequence $\{p_k\}_k$ with $\lim \limits_{k \to \infty}p_k = \infty$ of bifurcation points in the sense that, for every fixed $k \in \N$, there exists a sequence $\{u_{k,\ell}\}_{\ell}$ of nonradial solutions of (\ref{P-const}) for a corresponding sequence of exponents $\{p_{k,\ell}\}_\ell$ with $\lim \limits_{\ell \to \infty} p_{k,\ell} = p_k$ and with the property that $u_{k,\ell}$ converges to the unique radial positive solution of (\ref{P-const}) with $p=p_k$ as $\ell \to \infty$.
Theorem~\ref{thm:main-a-const} suggests that one of these bifurcation points corresponds to a global branch covering the unbounded interval
\[
\left(2+ \frac{2N}{\bigl(\frac{N-2}{2}\bigr)^2+ R_0^2},\infty \right)
\]
of exponents $p$. In the same way, Theorem~\ref{thm:main-a-const} complements similar bifurcation results given in \cite[Theorems~1.3 and 1.4]{gladiali-et-al2010} on the bifurcation of nonradial solutions with respect to the bifurcation parameter $R$ in the problem (\ref{P-const}) on $A=A_R:= \{x\in\mathbb{R}^N\,:\, R<|x|<R+1\}$ with $p>2$ being fixed.

Our Nehari-type variational approach allows us to characterize our solution as being a $\mathcal{K}$-ground state of problem (\ref{P}).  We are not aware of any analogous characterization for solutions of this problem in the literature.  This allows to estimate the energy of the solution and may be useful in some frameworks (for example, to detect the asymptotic behaviour of solutions as $p$ goes to infinity).

We also remark that the solutions obtained in Theorem \ref{thm:main-a-const} have Morse index greater than $N$. This follows from \cite[Theorem 1.1]{PW} and the fact that nonradial functions in $\mathcal{K}$ are axially symmetric but not foliated Schwarz symmetric.  

We briefly comment on the proof of Theorem~\ref{thm:main-a-const}. As a consequence of Theorem~\ref{thm:main_existence}, it suffices to show that the unique radial solution of (\ref{P-const}) cannot be a $\mathcal{K}$-ground state solution under the given assumptions. 
For this we show that, under assumption \eqref{eq:suff_p},  a specific instability property of the unique radial solution of (\ref{P-const}) with respect to the cone  $\mathcal{K}$ holds, see Proposition~\ref{radial-weakly-stable} below. 

\medskip

The paper is organized as follows. In Section \ref{sec2}, we collect some preliminary results and a priori estimates in the cone $\mathcal K$, see in particular Lemma \ref{le:est2}: it is interesting to observe that its proof uses trace inequalities and embeddings theorems for fractional Sobolev spaces. In Section \ref{sec3}, we prove the main result of the paper, Theorem \ref{thm:main_existence}. 
Finally, in Section \ref{sec:case-constant-a}, we deal with the case $a\equiv \mathrm{const.}$ and prove Theorem \ref{thm:main-a-const}.

\section{Preliminary results}\label{sec2}

\subsection{The linear problem in the cone} 

Let us consider the set $\mathcal{K}$ defined in \eqref{eq:K_tilde_def}.
It is easy to verify that it is a closed convex cone in $C^1_0(A)$, that is:
\begin{itemize}
\item[(i)] if $u\in \mathcal{K}$ and $\lambda>0$ then $\lambda u\in \mathcal{K}$;
\item[(ii)] if $u,v\in \mathcal{K}$ then $u+v\in \mathcal{K}$;
\item[(iii)] if $u,-u\in \mathcal{K}$ then $u\equiv0$;
\item[(iv)] $\mathcal{K}$ is closed in the $C^1_0$-topology.
\end{itemize}

We begin with the following auxiliary result; it deals with the linear problem with right-hand side belonging to $\mathcal{K}$.

\begin{lemma}\label{le:invarianceK}
For any $h\in \mathcal{K}$, the unique solution $u$ to the linear problem 
\begin{equation}\label{Ph}
\begin{cases}
-\Delta u + u =h \quad&\mbox{in } A,\\
u=0&\mbox{on }\partial A
\end{cases}
\end{equation}
belongs to $\mathcal{K}$.
\end{lemma}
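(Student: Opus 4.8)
The plan is to verify the three defining properties of $\mathcal{K}$ for the solution $u$ of the linear problem \eqref{Ph}: nonnegativity, the reflection symmetry $\mathfrak{u}(r,\theta)=\mathfrak{u}(r,-\theta)$, and the monotonicity $\mathfrak{u}_\theta(r,\theta)\le 0$ on $[R_0,R_1]\times(0,\pi/2)$. First I would invoke elliptic regularity to ensure that $u\in C^1(\overline{A})$ (indeed smoother, since $h\in C^1$), so that it is meaningful to check that $u\in C^1_0(A)$. Nonnegativity is immediate from the maximum principle: since $h\ge 0$ in $A$ and $u=0$ on $\partial A$, the operator $-\Delta+\mathrm{Id}$ being coercive gives $u\ge 0$ (and in fact $u>0$ in $A$ unless $h\equiv 0$, by the strong maximum principle).

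Next I would handle the axial symmetry and the even symmetry in $\theta$. The key observation is that $-\Delta+\mathrm{Id}$ commutes with the action of the orthogonal group fixing the $x_N$-axis, and the domain $A$ and the datum $h$ are invariant under this action; by uniqueness of the solution to \eqref{Ph}, $u$ inherits this invariance, hence is axially symmetric, $u=\mathfrak{u}(r,\theta)$. Likewise, the reflection $x_N\mapsto -x_N$ (i.e.\ $\theta\mapsto -\theta$) preserves $A$ and, since $h\in\widehat{\mathcal K}$, preserves $h$; uniqueness then forces $\mathfrak{u}(r,\theta)=\mathfrak{u}(r,-\theta)$.

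The monotonicity $\mathfrak{u}_\theta\le 0$ on $(0,\pi/2)$ is the crux and I expect it to be the main obstacle. The natural strategy is a rotating-plane / moving-plane type argument, or equivalently a comparison between $u$ and its reflections across hyperplanes through the origin. Concretely, for an angle $\alpha\in(0,\pi/2)$ consider the hyperplane $H_\alpha$ through the origin making the symmetry axis reflect $\theta\mapsto \pi - 2\alpha - \theta$ near the relevant range, or more cleanly: for a unit vector $e$ in the upper half-space let $\sigma_e$ be the reflection across $e^\perp$; the function $w := u - u\circ\sigma_e$ satisfies $(-\Delta+\mathrm{Id})w = h - h\circ\sigma_e$ in the relevant half-annulus, vanishes on the flat part of its boundary, and the sign of $h-h\circ\sigma_e$ there is controlled by the fact that $h\in\widehat{\mathcal K}$ (decreasing in $|\theta|$, even in $\theta$): the reflection moves points toward the axis, so $h\circ\sigma_e\ge h$, giving $(-\Delta+\mathrm{Id})w\le 0$ with $w\le 0$ on the boundary, whence $w\le 0$ by the maximum principle. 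Varying $e$ over all unit vectors with positive $x_N$-component and letting $e$ approach the axis yields, after differentiating (or by an infinitesimal version of the comparison), the desired inequality $\mathfrak{u}_\theta(r,\theta)\le 0$ for $\theta\in(0,\pi/2)$. The delicate points here are: setting up the right family of hyperplanes adapted to the annular geometry (reflections across hyperplanes through the origin map the annulus to itself, which is exactly what one needs), correctly identifying the sign of the datum's reflected difference from the structure of $\widehat{\mathcal K}$, and passing from the family of reflection inequalities to the pointwise derivative inequality — for which one can either differentiate the inequality in the angular parameter or apply Hopf's lemma on the limiting hyperplane containing the axis. Once $\mathfrak{u}_\theta\le0$ on $(0,\pi/2)$ is established, the companion inequality $\mathfrak{u}_\theta\ge 0$ on $(-\pi/2,0)$ follows from the already-proven evenness in $\theta$, and the proof is complete.
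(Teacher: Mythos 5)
Your regularity, nonnegativity and symmetry steps match the paper: elliptic regularity gives $u\in C^1_0(A)$, the maximum principle gives $u\ge 0$, and uniqueness plus invariance of the problem under $O(N-1)\times O(1)$ gives axial symmetry and evenness in $\theta$. The gap is exactly where you anticipated trouble: the monotonicity in $\theta$. Your rotating-plane scheme rests on the claim that for every unit vector $e$ with positive $x_N$-component, $h\circ\sigma_e\ge h$ on the half-annulus $A\cap\{x\cdot e>0\}$, because ``the reflection moves points toward the axis''. This is false for $e\neq e_N$. A function $h\in\widehat{\mathcal K}$ is, at fixed $r=|x|$, nonincreasing in $|x_N|$ (largest on the equator $\{x_N=0\}$, smallest near the poles), and a reflection across a tilted hyperplane through the origin does not decrease $|x_N|$ on the half-space $\{x\cdot e>0\}$: writing $(\sigma_e x)_N=x_N-2(x\cdot e)e_N$, any point of that half-space lying on or slightly below the equator (such points exist whenever $e\neq e_N$), or even a point slightly above the equator with $x'\cdot e'>0$, is sent to a point with $|(\sigma_e x)_N|$ much larger than $|x_N|$, hence to where $h$ is smaller. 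So the datum difference $h-h\circ\sigma_e$ has no sign on $\{x\cdot e>0\}$ (nor on the quarter $\{x\cdot e>0,\,x_N>0\}$, where in addition $w$ has no sign on $\{x_N=0\}$), and the comparison $u\le u\circ\sigma_e$ cannot be obtained this way. The only admissible reflection of this family is $e=e_N$, which yields evenness but no monotonicity. Structurally, the cone $\mathcal K$ encodes monotonicity of the latitude measured from the equatorial sphere, not foliated Schwarz symmetry about a ray; there is no one-parameter family of reflections of $A$ onto itself sweeping latitudes toward the equator, so the moving-plane/polarization machinery is not adapted to this symmetry class (the paper even remarks that nonradial elements of $\mathcal K$ are axially symmetric but not foliated Schwarz symmetric).

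What the paper does instead is differentiate the equation with respect to $\theta$: using the axially symmetric form of the Laplacian, $u_\theta$ solves
\[
-\Delta u_\theta+\Bigl(\frac{N-2}{x_1^2+\cdots+x_{N-1}^2}+1\Bigr)u_\theta=h_\theta\le 0
\]
in the upper half-annulus away from the symmetry axis, and $u_\theta$ vanishes on the boundary of $A\cap\{x_N>0\}$ (on $\partial A$ because $\partial_\theta$ is tangential there, on the axis and on $\{x_N=0\}$ by regularity and evenness). Since the potential is singular on the axis, the weak maximum principle is not applied directly; one tests the equation with $(u_\theta-\varepsilon)^+$, which has compact support away from the axis, concludes $(u_\theta-\varepsilon)^+\equiv 0$ for every $\varepsilon>0$, and hence $u_\theta\le 0$ in the upper half-annulus. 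If you want to salvage your write-up, this differentiated-equation argument (or an equivalent sliding in $\theta$ alone, which is not induced by any rigid reflection of $A$) is the step you need to supply in place of the reflection comparison.
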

\begin{proof}
Let us first notice that, since $h \in C^1_0(A) \subset C^{0,\alpha}(\overline A)$, by elliptic regularity we have $u \in C^{2,\alpha}(\overline A)$ and thus $u \in C^1_0(A)$, as well.
By uniqueness and thanks to the fact that the problem (i.e., the operator, the right-hand side, and the domain) is invariant under the action of the group $O(N-1)\times O(1)$, the solution $u$ is such that $u=\mathfrak{u}(r,\theta)$ and $\mathfrak{u}(r,\theta)=\mathfrak{u}(r,-\theta)$ for $\theta\in(-\pi/2,\pi/2)$. Furthermore, by the maximum principle, since $h\ge0$, also $u\ge 0$ in $A$. 

In order to prove the monotonicity with respect to $\theta$, we recall the expression of the Laplacian of an axially symmetric function:
\begin{equation}\label{eq:Laplace_radial}
\Delta u=\mathfrak{u}_{rr}+\frac{N-1}{r}\mathfrak{u}_r+\frac{1}{r^2}\Delta_{\mathbb S^{N-1}}\mathfrak{u},
\end{equation}
where $\Delta_{\mathbb S^{N-1}}$ is the Laplace-Beltrami operator on the $(N-1)$-sphere with its canonical metric, which for axially symmetric functions has the form 
$$
\Delta_{\mathbb S^{N-1}}\mathfrak{u}=\frac{1}{\cos^{N-2}\theta}\partial_\theta\left(\cos^{N-2}\theta \, \mathfrak{u}_\theta \right).
$$
Therefore, if we perform a partial derivative in $\theta$ for the equation in \eqref{Ph}, we get the pointwise equation
$$
-\mathfrak{u}_{\theta rr}-\frac{N-1}{r} \mathfrak{u}_{\theta r}+\frac{N-2}{r^2}\frac{1}{\cos^2\theta}\mathfrak{u}_\theta+\frac{N-2}{r^2}\tan\theta \, \mathfrak{u}_{\theta\theta}-\frac{1}{r^2}\mathfrak{u}_{\theta\theta\theta}+\mathfrak{u}_\theta=\mathfrak h_\theta,
$$
for $(r,\theta)\in (R_0,R_1)\times\left(-\pi/2,\pi/2\right)$.
Defining $u_\theta(x), h_\theta(x)$ by the relations
\[
u_\theta(x)=\mathfrak{u}_\theta\left(|x|,\arcsin\left(\frac{x_N}{r}\right)\right),
\qquad
h_\theta(x)=\mathfrak{h}_\theta\left(|x|,\arcsin\left(\frac{x_N}{r}\right)\right),
\]
and noticing that $r^2\cos^2\theta=x_1^2+\ldots+x_{N-1}^2$,
we can rewrite the previous expression as
$$
-\Delta u_\theta+\Bigl(\frac{N-2}{x_1^2+\ldots+x_{N-1}^2}+1\Bigr)u_\theta = h_\theta 
\quad\mbox{in } \tilde A:= \{ x \in A\::\: x_1^2+\ldots+x_{N-1}^2 \not = 0\}.
$$
We now wish to show that
\begin{equation}
  \label{eq:claim-lemma-2-1}
u_\theta \le 0 \qquad \text{in }\tilde A_+:= \tilde A\cap \{x_N>0\}.
\end{equation}
For this we first note that $u_\theta= 0$ on $\partial \tilde A_+$. Indeed,   
being $u$ regular and  axially symmetric with respect to the $x_N$-axis, we deduce that $\mathfrak{u}_\theta(r,\pi/2)=0$ for every $r\in [R_0,R_1]$, so that $u_\theta$ vanishes along the $x_N$-axis. Moreover, since $u=0$ on $\partial A$, and $\mathfrak{u}_\theta(r,\theta)$ is the derivative of $u$ in the tangential direction to $\partial A$, we have $\mathfrak{u}_\theta(r,\theta)=0$ on $\{R_0,R_1\}\times (0,\pi/2)$, so that $u_\theta=0$ on $\partial A$. Finally, since  $u$ is axially symmetric and even with respect to $\theta$, again by regularity we have $\mathfrak{u}_\theta(r,0^+)=-\mathfrak{u}_\theta(r,0^-)=\mathfrak{u}_\theta(r,0)$ for every $r\in [R_0,R_1]$, and so $\mathfrak{u}_\theta(r,0)=0$ for every $r\in [R_0,R_1]$. Hence $u_\theta= 0$ on $\partial (A\cap \{x_N>0\})$.

In sum, $u_\theta$ satisfies the problem
\begin{equation}\label{pb:for-u_theta}
\left\{  \begin{aligned}
-\Delta u_\theta+\Bigl(\frac{N-2}{x_1^2+\ldots+x_{N-1}^2}+1\Bigr)u_\theta &= h_\theta\le 0 &&\quad \mbox{pointwisely in } \tilde A_+\\
 u_\theta &= 0 &&\quad   \text{on $\partial \tilde A_+$.}   
  \end{aligned}
\right.
\end{equation}
Due to the singularity of the equation on the $x_N$-axis, we cannot apply the weak maximum principle directly to deduce that $u_\theta \le 0$. Instead, we let $\varepsilon>0$ and consider the function $v:= (u_\theta-\varepsilon)^+$ on $\tilde A_+$. By  (\ref{pb:for-u_theta}) and since $u_\theta \in C^1(\tilde A_+)$, the function $v \in  H^1_0(\tilde A_+)$ has compact support in $\tilde A_+$. Hence we may multiply (\ref{pb:for-u_theta}) with $v$ and integrate by parts, obtaining the inequality
\begin{align}
  &\int_{\tilde A_+}\Bigl(|\nabla v|^2 +\Bigl(\frac{N-2}{x_1^2+\ldots+x_{N-1}^2}+1\Bigr)v^2\Bigr)\,dx \nonumber\\
  &\le \int_{\tilde A_+}\Bigl(\nabla u_\theta \cdot \nabla v   +\Bigl(\frac{N-2}{x_1^2+\ldots+x_{N-1}^2}+1\Bigr)u_\theta v\Bigr)\,dx \nonumber \\
  &= \int_{\tilde A_+}\Bigl( -\Delta u_\theta+\Bigl(\frac{N-2}{x_1^2+\ldots+x_{N-1}^2}+1\Bigr)u_\theta\Bigr) v\,dx = \int_{\tilde A_+} h_\theta v \,dx \le 0. \label{integral-estimate}
\end{align}
Here, the integration by parts in the second step is justified by approximating $v$ in the $H^1$-norm by a sequence of functions $(v_n)_n \subset C^\infty_c(\Omega)$, where $\Omega \subset \tilde A_+$ is a compactly contained subdomain containing the support of $v$. Now (\ref{integral-estimate}) implies that $v = (u_\theta-\varepsilon)^+ \equiv 0$ in $\tilde A_+$. Since $\varepsilon>0$ was chosen arbitrarily, we deduce (\ref{eq:claim-lemma-2-1}), as claimed. Since $\mathfrak{u}$ is even with respect to $\theta$, then $\mathfrak{u}_\theta\ge 0$ for $\theta\in(-\pi/2,0)$ and the proof is concluded.
\end{proof}

\subsection{A priori estimates in the cone}

In this section, we deal with the larger cone
$$
\widetilde{\mathcal K}:= \left\{u\in H^1_0(A)\,:\, \begin{aligned} & u=\mathfrak{u}(r,\theta), \,u\ge 0 \mbox{ a.e. in } A,\\
&\mathfrak{u}(r,\theta)=\mathfrak{u}(r,-\theta) \mbox{ a.e. in }(R_0,R_1)\times(0,\pi/2) ,\\ 
& \mathfrak{u}_\theta(r,\theta)\le 0 \mbox{ a.e. in }(R_0,R_1)\times(0,\pi/2). 
\end{aligned}
\right\}.
$$
where $\mathfrak{u}_\theta$ denotes the weak derivative of $\mathfrak{u}$ with respect to $\theta$. Notice that, being $u\in H^1(A)$, we have that $\mathfrak{u}\in H^1_{\mathrm{loc}}((R_0,R_1)\times(-\pi/2,\pi/2))$ (see for example \cite[Proposition 9.6]{brezis2010}), so that $\mathfrak{u}_\theta \in L^2_{\mathrm{loc}}((R_0,R_1)\times(-\pi/2,\pi/2))$ and the almost everywhere sign condition on $\mathfrak{u}_\theta$ appearing in \eqref{cone} makes sense.

Of course, $\mathcal{K} = \widetilde{\mathcal K} \cap C^1_0(A)$. The fact that $\widetilde{\mathcal K}$ is a cone is easily verified (cf (i)-(iii) at the beginning of the previous section). Below, we explicitly prove that $\widetilde{\mathcal K}$ is closed with respect to the $H^1$-topology.

\begin{lemma}\label{le:Kcone}
$\widetilde{\mathcal K}$ is closed with respect to the $H^1(A)$-norm; as a consequence, it is weakly closed.
\end{lemma}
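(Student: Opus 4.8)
The plan is to show that $\widetilde{\mathcal K}$ is closed in $H^1_0(A)$ by taking a sequence $(u_n)_n \subset \widetilde{\mathcal K}$ with $u_n \to u$ in $H^1_0(A)$ and verifying that $u$ inherits all three defining properties: nonnegativity, evenness in $\theta$, and the sign condition on $\mathfrak{u}_\theta$. The first two are the routine part. For nonnegativity: $H^1$-convergence implies (up to a subsequence) a.e.\ pointwise convergence, so $u_n \ge 0$ a.e.\ passes to $u \ge 0$ a.e.; alternatively one can argue that $\{v \in H^1_0(A) : v \ge 0 \text{ a.e.}\}$ is convex and strongly (hence weakly) closed. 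The axial symmetry and evenness in $\theta$ are linear constraints: in coordinates $(r,\theta)$ they say $\mathfrak{u}(r,\theta) = \mathfrak{u}(r,-\theta)$, equivalently $u$ is fixed by the isometric $O(N-1)\times O(1)$-action on $H^1_0(A)$, and the fixed-point set of a group of bounded linear operators is a closed subspace; so this property also passes to the limit.

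The substantive point is the sign condition $\mathfrak{u}_\theta \le 0$ a.e.\ in $(R_0,R_1)\times(0,\pi/2)$. Here I would exploit the fact, recalled just before the lemma, that $u \in H^1(A)$ forces $\mathfrak{u} \in H^1_{\mathrm{loc}}((R_0,R_1)\times(-\pi/2,\pi/2))$, and that the change of variables $x \mapsto (r,\theta)$ is, on compact subsets of the open parameter rectangle away from the axis, a smooth diffeomorphism with Jacobian bounded above and below. Consequently $u_n \to u$ in $H^1(A)$ implies $\mathfrak{u}_n \to \mathfrak{u}$ in $H^1_{\mathrm{loc}}$ of the rectangle, and in particular the weak derivatives converge: $\partial_\theta \mathfrak{u}_n \to \partial_\theta \mathfrak{u}$ in $L^2_{\mathrm{loc}}((R_0,R_1)\times(-\pi/2,\pi/2))$. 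The set $\{g \in L^2(Q) : g \le 0 \text{ a.e.}\}$ is convex and strongly closed in $L^2(Q)$ for any measurable $Q$, so on each compact subrectangle $Q \Subset (R_0,R_1)\times(0,\pi/2)$ we get $\partial_\theta \mathfrak{u} \le 0$ a.e.\ on $Q$; exhausting $(R_0,R_1)\times(0,\pi/2)$ by such $Q$ yields $\mathfrak{u}_\theta \le 0$ a.e.\ there. Hence $u \in \widetilde{\mathcal K}$, proving closedness; and a norm-closed convex set in a Banach space is weakly closed by Mazur's theorem, giving the stated consequence.

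The main obstacle — really the only place where care is needed — is justifying that $H^1(A)$-convergence of $u_n$ transfers to $H^1_{\mathrm{loc}}$-convergence of the profiles $\mathfrak{u}_n$ in the $(r,\theta)$-variables, and thus to $L^2_{\mathrm{loc}}$-convergence of $\partial_\theta \mathfrak{u}_n$. Away from the symmetry axis this is a clean consequence of the chain rule and the boundedness above and below of the change-of-variables Jacobian on compact sets, so one should phrase the argument on compact subsets $Q \Subset (R_0,R_1)\times(0,\pi/2)$ (which correspond to compact subsets of $\overline A$ not meeting the axis) and then exhaust. One could also sidestep profiles entirely and work intrinsically: $\mathfrak{u}_\theta$ is, up to a smooth nonvanishing factor, the component of $\nabla u$ along the Killing vector field generating rotations about the $x_N$-axis, and $\nabla u_n \to \nabla u$ in $L^2(A)$; but the profile formulation matches the notation already set up in the paper, so I would keep it. I would also note, for cleanliness, that it suffices to test the sign condition against nonnegative $C_c^\infty$ test functions supported in the open region and pass to the limit, which makes the $L^2_{\mathrm{loc}}$-closedness argument completely transparent.

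\begin{proof}
Let $(u_n)_n \subset \widetilde{\mathcal K}$ and $u \in H^1_0(A)$ with $u_n \to u$ in $H^1_0(A)$. We check that $u$ satisfies the three conditions defining $\widetilde{\mathcal K}$. Passing to a subsequence, $u_n \to u$ a.e.\ in $A$, so $u \ge 0$ a.e.\ in $A$; alternatively, the set of a.e.\ nonnegative functions is convex and strongly closed in $H^1_0(A)$. The constraint of axial symmetry and evenness in $\theta$ expresses that $u$ is fixed by the isometric action of $O(N-1)\times O(1)$ on $H^1_0(A)$; since each group element acts as a bounded linear operator, the fixed-point set is a closed subspace, and hence $u = \mathfrak{u}(r,\theta)$ with $\mathfrak{u}(r,\theta) = \mathfrak{u}(r,-\theta)$ a.e. Finally, fix a compact rectangle $Q \Subset (R_0,R_1)\times(0,\pi/2)$; it corresponds to a compact subset of $\overline A$ disjoint from the $x_N$-axis, on which the change of variables $x \mapsto (r,\theta)$ is a smooth diffeomorphism with Jacobian bounded above and below. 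Therefore $u_n \to u$ in $H^1(A)$ forces $\mathfrak{u}_n \to \mathfrak{u}$ in $H^1(Q)$, in particular $\partial_\theta \mathfrak{u}_n \to \partial_\theta \mathfrak{u}$ in $L^2(Q)$. Since $\{g \in L^2(Q) : g \le 0 \text{ a.e.}\}$ is convex and strongly closed in $L^2(Q)$ and $\partial_\theta \mathfrak{u}_n \le 0$ a.e.\ on $Q$, we get $\mathfrak{u}_\theta \le 0$ a.e.\ on $Q$. Exhausting $(R_0,R_1)\times(0,\pi/2)$ by such rectangles $Q$ yields $\mathfrak{u}_\theta \le 0$ a.e.\ in $(R_0,R_1)\times(0,\pi/2)$. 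Hence $u \in \widetilde{\mathcal K}$, so $\widetilde{\mathcal K}$ is closed in $H^1(A)$. Being convex and norm-closed, $\widetilde{\mathcal K}$ is weakly closed by Mazur's theorem.
\end{proof}
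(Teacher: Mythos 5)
Your proof is correct and takes essentially the same route as the paper's: one checks that each of the three defining conditions passes to the $H^1$-limit, with the substantive step being the transfer of the sign condition on $\mathfrak{u}_\theta$ via convergence of gradients away from the symmetry axis, and weak closedness then follows from convexity. The only cosmetic difference is that the paper writes $0\ge \partial_\theta\mathfrak{u}_n=\nabla u_n\cdot \frac{\partial x}{\partial\theta}$ and passes to a.e.\ convergence of the gradients along a subsequence, whereas you use $L^2_{\mathrm{loc}}$-convergence of $\partial_\theta\mathfrak{u}_n$ on compact rectangles and closedness of the nonpositive cone in $L^2$.
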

\begin{proof}
Let $\{u_n\}_{n}\subset\widetilde{\mathcal K}$ and $u\in H^1_0(A)$ be such that $u_n\to u$ in $H^1(A)$ as $n\to\infty$. Clearly, $u$ is axially symmetric, non-negative and even with respect to $\theta$ by pointwise almost everywhere convergence up to a subsequence. Let us check that $\mathfrak{u}_\theta\le 0$. Again by \cite[Proposition 9.6]{brezis2010}, we can write
\[
0\geq \frac{\partial\mathfrak{u}_n}{\partial\theta} = \nabla u_n \cdot \frac{\partial x}{\partial \theta} \to \nabla u \cdot \frac{\partial x}{\partial \theta} =u_\theta
\]
almost everywhere, as $n\to\infty$. 
Then, $u \in \widetilde{\mathcal K}$, proving that $\widetilde{\mathcal K}$ is closed in the strong $H^1$-topology. Since a cone is a convex set, we conclude that $\widetilde{\mathcal K}$ is weakly closed, as well.
\end{proof}

In the following we denote $D:=A\cap\{x_N=0\}$ and we use the notation $x=(x',x_N)$ for every $x\in \mathbb R^N$ so that $x=(x',0)$ for every $x\in D$. The main result of this section is the following a priori bound for functions in the cone $\widetilde{\mathcal K}$.

\begin{lemma}\label{le:est2}
$\widetilde{\mathcal K} \subset L^q(A)$ for every $q\geq1$. Moreover, for every $q\ge 1$ there exists a positive constant $C(q)$ such that 
\begin{equation}\label{eq:est3}
\|u\|_{L^q(A)}\leq C(q) \|u\|_{H^1(A)} \quad\mbox{for every }u\in \widetilde{\mathcal K}.
\end{equation}
\end{lemma}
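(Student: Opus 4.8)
The plan is to exploit the symmetry and monotonicity of functions in $\widetilde{\mathcal K}$ to reduce the $L^q$-bound to a bound on the restriction $u|_D$, where $D = A \cap \{x_N = 0\}$ is an $(N-1)$-dimensional annulus. The key observation is that for $u = \mathfrak u(r,\theta) \in \widetilde{\mathcal K}$, the condition $\mathfrak u_\theta \le 0$ on $(R_0,R_1)\times(0,\pi/2)$ (and $\ge 0$ on the negative $\theta$-range) means that, at fixed $r$, the function $\mathfrak u(r,\cdot)$ attains its maximum over $\theta$ at $\theta = 0$, i.e. on $D$. Hence $0 \le u(x) \le u(x'/|x'| \cdot |x|, 0)$ — more precisely, for $x = (x', x_N)\in A$ with $r = |x|$, we have $u(x) \le \mathfrak u(r, 0)$, which is the value of $u$ on the sphere of radius $r$ intersected with $D$. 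So pointwise $u$ is controlled by a radially-extended copy of its trace on $D$.

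First I would make this precise: define $w(x) := \mathfrak u(|x|, 0)$, a radial function on $A$, and note $0 \le u \le w$ a.e., so $\|u\|_{L^q(A)} \le \|w\|_{L^q(A)}$. Since $w$ is radial, $\|w\|_{L^q(A)}^q = \omega_{N-1}\int_{R_0}^{R_1} |\mathfrak u(r,0)|^q r^{N-1}\,dr$, which up to the bounded factor $R_1^{N-1}$ is comparable to $\int_{R_0}^{R_1}|\mathfrak u(r,0)|^q\,dr$, i.e. to a one-dimensional (radial) $L^q$-norm of the trace $u|_D$. The task is then to bound this trace norm by $\|u\|_{H^1(A)}$. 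Here is where the trace and fractional Sobolev embedding machinery alluded to in the paper's introduction comes in: the restriction of $u \in H^1(A)$ to the hyperplane section $D$ lies in the fractional space $H^{1/2}(D)$ (trace theorem for the half-space applied locally, or directly since $D$ is a smooth $(N-1)$-manifold with $u$ vanishing near $\partial A$), with $\|u\|_{H^{1/2}(D)} \le C\|u\|_{H^1(A)}$. Now $D \subset \R^{N-1}$ has dimension $N-1$, and $H^{1/2}(D)$ embeds into $L^s(D)$ for all $s \le \frac{2(N-1)}{N-2}$ — but $2(N-1)/(N-2)$ can be large (it equals $2^*$ only formally; for the $(N-1)$-dimensional trace space with smoothness $1/2$ the critical exponent is $2(N-1)/((N-1)-1) = 2(N-1)/(N-2)$), and crucially this is still a finite range, not all $q \ge 1$. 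To get \emph{all} $q$, I would iterate: use again that $u|_D$ is itself axially symmetric in the lower-dimensional sense within $D$ (it depends only on $r$, hence it is radial on the $(N-1)$-dimensional annulus $D$), and a radial $H^{1/2}$ function on an $(N-1)$-annulus away from the origin is much better behaved — in fact, as in the argument for radial functions, radial symmetry plus any fractional Sobolev regularity on an annulus bounded away from $0$ gives Hölder continuity in the radial variable, hence an $L^\infty$ bound, hence all $L^q$ bounds. Concretely: $\mathfrak u(\cdot, 0) \in H^{1/2}_{\mathrm{loc}}$ of a one-dimensional interval (after the radial reduction the angular $\mathbb S^{N-2}$-directions are trivial), and in one dimension $H^{1/2}$ just fails to embed into $L^\infty$, so one more reduction or a direct estimate on $w$ via its radial profile is needed.

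A cleaner route, which I would actually pursue to avoid the endpoint subtlety, is: bound $w$ (the radial majorant) directly. Since $0 \le u \le w$ and $w$ is radial with $w(x) = \mathfrak u(|x|,0)$, and since $u$ and $w$ agree on $D$, estimate $\|\mathfrak u(\cdot,0)\|_{L^q((R_0,R_1))}$ by splitting: write, for $r_1 < r_2$ in $[R_0,R_1]$,
\[
|\mathfrak u(r_2,0)|^q - |\mathfrak u(r_1,0)|^q = \int_{r_1}^{r_2} q\,|\mathfrak u(r,0)|^{q-2}\mathfrak u(r,0)\,\mathfrak u_r(r,0)\,dr,
\]
but since we only control $u$ in $H^1(A)$, not the trace of $\nabla u$ on $D$, this is not directly available either. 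So the honest path is the trace-space one: $u|_D \in H^{1/2}(D) \hookrightarrow L^s(D)$ for $s$ in a finite range, then bootstrap using that $u|_D$ is radial on $D$ and apply a radial Strauss-type lemma on the $(N-1)$-annulus to upgrade from some $L^s(D)$ with $s$ large enough (together with the $H^{1/2}$-radial regularity) to $L^\infty$, which then yields \eqref{eq:est3} for every $q$ with $C(q) \le C(q) \cdot$(measure of $A$)$^{1/q}$ times the trace constant. The main obstacle is precisely making the endpoint/bootstrap step rigorous: a single application of the trace theorem plus fractional Sobolev embedding only gives a finite window of exponents, and one must invoke the radial structure of $u|_D$ (inherited from axial symmetry) together with the fact that $D$ is an annulus bounded away from the origin to close the gap and reach all $q \ge 1$; I would carry out the reduction to the radial majorant $w$ first, then the trace estimate, then the radial bootstrap, in that order.
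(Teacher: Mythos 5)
Your reduction steps are the same as the paper's: the monotonicity in $\theta$ bounds $\|u\|_{L^q(A)}$ by the $L^q$-norm of the trace $u(\cdot,0)$ on $D=A\cap\{x_N=0\}$, and the trace theorem (applied to the extension of $u$ by zero) controls $[\,\tilde u(\cdot,0)\,]_{H^{1/2}(\R^{N-1})}$ by $\|u\|_{H^1(A)}$. The gap is in your final step. You propose to use the radial structure of $u|_D$ and a ``radial Strauss-type lemma'' to upgrade to an $L^\infty(D)$ bound, and you even note along the way that one-dimensional $H^{1/2}$ just fails to embed into $L^\infty$ --- but then you rely on exactly that failed endpoint. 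Strauss/Ni-type pointwise radial estimates for $\dot H^s(\R^{n})$ require $s>1/2$; at $s=1/2$ a radial function restricted to an annulus bounded away from the origin is essentially a one-dimensional $H^{1/2}$ profile, which can be unbounded (e.g.\ with iterated-logarithm singularities). So no $L^\infty$ bound of the form $\|u(\cdot,0)\|_{L^\infty(D)}\le C\|u\|_{H^1(A)}$ is available, and your bootstrap as stated does not close. Combining a finite-window embedding $H^{1/2}(D)\hookrightarrow L^s(D)$ with ``radial $H^{1/2}$ regularity'' does not produce boundedness either; you never exhibit a mechanism that actually reaches arbitrarily large exponents.

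What the paper does instead, and what your argument is missing, is a \emph{weighted} embedding for radial fractional Sobolev functions: by \cite[Lemma 5.1]{DN} (with $n=N-1$, $s=1/2$), for every $c>-1$ one has
\begin{equation*}
\Bigl(\int_{\R^{N-1}}|x'|^{c}\,|\tilde u(x',0)|^{2^*_c}\,dx'\Bigr)^{1/2^*_c}\le C\,[\tilde u(\cdot,0)]_{H^{1/2}(\R^{N-1})},
\qquad 2^*_c=\tfrac{2(N-1+c)}{N-2},
\end{equation*}
valid for radial functions. Since $2^*_c\to\infty$ as $c\to\infty$ and $D$ lies at distance $R_0>0$ from the origin, the weight satisfies $|x'|^c\ge R_0^c$ on $D$, so this yields $\|u(\cdot,0)\|_{L^{q}(D)}\le C(q)\|u\|_{H^1(A)}$ for every finite $q$ --- arbitrarily high integrability, but never $L^\infty$, which is exactly the statement needed and the correct substitute for your endpoint bootstrap. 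If you want to avoid the weighted lemma, you would instead have to prove that the radial profile of the trace lies in the critical one-dimensional space $H^{1/2}(R_0,R_1)$ with controlled norm and invoke the critical embedding into all $L^q$, $q<\infty$ (e.g.\ via exponential integrability); that radial-reduction step is itself nontrivial and is precisely what the cited lemma packages. As written, your proof does not establish the lemma.
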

\begin{proof} Let $\tilde u$ be the trivial extension to zero of $u$ outside $A$. 
On the one hand, by the trace inequality, we have
\begin{equation}\label{RtoA}
\left[\tilde u(x',0) \right]_{H^{1/2}(\mathbb R^{N-1})}\le C\|\tilde u\|_{H^1(\mathbb R^N_+)}\le C\|\tilde u\|_{H^1(\mathbb R^N)}= C\|u\|_{H^1(A)}.
\end{equation}
On the other hand, by \cite[Lemma 5.1]{DN} with $n=N-1$, $s=1/2$, since $\tilde u(x',0)$ is a radial function in $\mathbb R^{N-1}$, we obtain for every $c>-1$
\begin{equation}\label{DeNapoli}
\left(\int_{\mathbb R^{N-1}}|x'|^c\left|\tilde u(x',0)\right|^{2^*_c}dx'\right)^{1/2^*_c}\le C [\tilde u(x',0)]_{H^{1/2}(\mathbb R^{N-1})},
\end{equation}
where $2^*_c:=\frac{2(N-1+c)}{N-2}$. Notice that \cite[Lemma 5.1]{DN} is stated for $C^\infty_c(\R^{N-1})$ radial functions, but it can be extended, by a density argument, to $H^{1/2}(\R^{N-1})$ radial functions.
Moreover, being $R_0>0$,
\begin{equation}
\label{DtoR}
\int_{\mathbb R^{N-1}}|x'|^c\left|\tilde u(x',0)\right|^{2^*_c} dx'=\int_{D}|x'|^c\left| u(x',0)\right|^{2^*_c}dx' \ge R_0^c \left\|u(\cdot,0) \right\|_{L^{2^*_c}(D)}^{2^*_c}.
\end{equation}
Since $2^*_c\to\infty$ as $c\to\infty$, combining \eqref{RtoA}, \eqref{DeNapoli} and \eqref{DtoR},  we get the existence of a constant $C_1(q)$ such that
\begin{equation}\label{eq:est1}
\left\|u(\cdot,0)\right\|_{L^q(D)}\le C_1(q)\|u\|_{H^1(A)}\quad\mbox{for every }u\in \widetilde{\mathcal K}.
\end{equation}

By the axial symmetry and the monotonicity properties of $u\in \widetilde{\mathcal K}$, we deduce
\begin{multline*}
\int_A |u|^q dx =\omega_{N-2}\int_{R_0}^{R_1}\int_{-\frac{\pi}{2}}^{\frac{\pi}{2}}|\mathfrak u(r,\theta)|^q(\cos\theta)^{N-2}r^{N-1}  dr \,d\theta\\
\le \omega_{N-2}\int_{R_0}^{R_1}\int_{-\frac{\pi}{2}}^{\frac{\pi}{2}}|\mathfrak u(r,0)|^q R_1 r^{N-2}dr \,d\theta=\omega_{N-2}R_1\pi \int_{R_0}^{R_1}|\mathfrak u(r,0)|^q r^{N-2}dr\\
=R_1\pi\int_D\left|u(\cdot,0)\right|^q dx'. 
\end{multline*}
Combining the last inequality with \eqref{eq:est1} we have the desired estimate.
\end{proof}

\subsection{The fixed point operator $T$} 
Hereafter, let $p>2$ be fixed. 
We define the following operator:
\[
T:\widetilde{\mathcal K}\cup C^1_0(A)\to H^1_0(A) \qquad T(u):=(-\Delta+\mathrm{Id})^{-1}(a(x) |u|^{p-2}u),
\]
namely $T(u)=v$ is the unique $H^1_0(A)$ function satisfying
\begin{equation}\label{eq:T_def}
\int_A (\nabla v\cdot \nabla\varphi+v\varphi)\,dx=\int_A a(x)|u|^{p-2} u \varphi \,dx
\quad\text{for every } \varphi\in C^\infty_c(A).
\end{equation}
This definition is clearly well posed when $u \in C^1_0(A)$ since $a(x) |u|^{p-2}u \in C^1_0(A)$. 
On the other hand, when $u \in \widetilde{\mathcal K}$, by Lemma \ref{le:est2} we have
$$
u^{p-1} \in L^q(A) \quad\mbox{for every }q\geq 1.
$$
This, in particular, implies that
\begin{equation}\label{eq:est_Lq}
a(x)|u|^{p-2}u = a(x)u^{p-1}  \in L^2(A).
\end{equation}
so that $T(u)$ is well-defined, again, with $T(u) \in H^1_0(A)$.

We also observe that 
\begin{equation}\label{eq:T(K)}
T(\mathcal{K})\subset\mathcal{K}.
\end{equation}
Indeed, thanks to \eqref{a_assumptions}, we have that $a(x)u^{p-1}\in\mathcal{K}$ for every $u\in\mathcal{K}$ and so, by Lemma \ref{le:invarianceK}, $T(u)\in \mathcal{K}$.

We now prove that $T$, when restricted to $\mathcal{K}$, has suitable continuity and compactness properties.

\begin{proposition}\label{prop:T_compact}
Let $\{u_n\}_n\subset \mathcal{K}$ be such that $u_n\rightharpoonup u$ weakly in $H^1(A)$ for some $u\in\widetilde{\mathcal K}$. Then $T(u) \in \mathcal{K}$ and $T(u_n) \to T(u)$ in $C^1_0(A)$. 
\end{proposition}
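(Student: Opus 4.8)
The plan is to exploit the compactness embodied in Lemma~\ref{le:est2}, which is the crucial improvement over the critical case: for functions in the cone $\widetilde{\mathcal K}$, the map $u \mapsto a(x)|u|^{p-2}u$ is as well-behaved as in a subcritical problem. First I would show that $T(u)\in\mathcal K$. Since $u\in\widetilde{\mathcal K}$, by Lemma~\ref{le:est2} we have $a(x)u^{p-1}\in L^2(A)$, so $T(u)\in H^1_0(A)$ is well-defined; but more is true, since $u^{p-1}\in L^q(A)$ for every $q\ge1$ by the same lemma, elliptic $L^q$-regularity gives $T(u)\in W^{2,q}(A)$ for all $q$, hence $T(u)\in C^1(\overline A)$, i.e. $T(u)\in C^1_0(A)$. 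To see $T(u)\in\widetilde{\mathcal K}$ (and therefore, being $C^1$, in $\mathcal K$), one repeats the argument of Lemma~\ref{le:invarianceK}: the right-hand side $a(x)u^{p-1}$ lies in $\widetilde{\mathcal K}$ (products and powers of functions in $\widetilde{\mathcal K}$ stay in $\widetilde{\mathcal K}$, using $a\in\widehat{\mathcal K}$ from~\eqref{a_assumptions}), and the differential inequality for the $\theta$-derivative together with the maximum-principle argument in the punctured annulus $\tilde A_+$ forces monotonicity in $\theta$; alternatively one may approximate $u$ by $C^1$ elements of $\mathcal K$ and pass to the limit using the continuity statement we are about to prove. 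So $T(u)\in\mathcal K$.

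Next I would establish the convergence $T(u_n)\to T(u)$. Write $v_n:=T(u_n)$ and $v:=T(u)$. The weak convergence $u_n\rightharpoonup u$ in $H^1(A)$, up to a subsequence, yields $u_n\to u$ a.e.\ in $A$ and, since $\{u_n\}$ is $H^1$-bounded, the bound $\|u_n\|_{L^q(A)}\le C(q)\|u_n\|_{H^1(A)}\le C'(q)$ from~\eqref{eq:est3} holds uniformly in $n$ for every $q\ge1$. Pick some $q$ large (say $q=2(p-1)$, or more, as needed below); then $\{a\,u_n^{p-1}\}_n$ is bounded in $L^{q/(p-1)}(A)$ with $q/(p-1)$ as large as we like, and by a.e.\ convergence plus Vitali (uniform integrability from the higher-$L^r$ bound) we get $a\,u_n^{p-1}\to a\,u^{p-1}$ strongly in $L^s(A)$ for every fixed $s<\infty$. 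By elliptic estimates, $\|v_n-v\|_{W^{2,s}(A)}\le C_s\|a\,u_n^{p-1}-a\,u^{p-1}\|_{L^s(A)}\to0$; choosing $s>N$ and invoking the compact (indeed continuous) embedding $W^{2,s}(A)\hookrightarrow C^1(\overline A)$ gives $v_n\to v$ in $C^1(\overline A)$, i.e.\ in $C^1_0(A)$. Finally, since the limit is the same along every subsequence, the full sequence converges.

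The main obstacle, and the point that makes the argument work at all, is the passage from the weak $H^1$-bound on $\{u_n\}$ to strong $L^s$-convergence of the nonlinearity $a\,u_n^{p-1}$ for $s$ large enough to feed into $C^1$-elliptic estimates — this is precisely where the supercriticality would normally be fatal, and it is rescued entirely by the cone estimate~\eqref{eq:est3}, which converts the one-dimensional-type decay of axially symmetric monotone functions near the symmetry axis into $L^q$-bounds for all $q$. One should be slightly careful that~\eqref{eq:est3} is stated for $\widetilde{\mathcal K}$ and that the weak $H^1$-limit $u$ indeed lies in $\widetilde{\mathcal K}$: this is exactly the content of Lemma~\ref{le:Kcone} (weak closedness of $\widetilde{\mathcal K}$), which is consistent with the hypothesis $u\in\widetilde{\mathcal K}$ in the statement. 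A minor technical check is the uniform integrability needed for Vitali's theorem, which follows from boundedness in $L^{r}$ for one $r$ strictly larger than the target exponent $s$ — available for every $s$ thanks to~\eqref{eq:est3}.
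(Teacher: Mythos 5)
Your proposal is correct in substance but proves the convergence by a genuinely different mechanism than the paper. The paper argues in two steps: first a quantitative $H^1$ estimate, $\|T(u_n)-T(u)\|_{H^1(A)}\le C'\|u_n-u\|_{L^\alpha(A)}$ with $\alpha<2^*$, obtained from the elementary inequality $|\xi^{p-1}-\eta^{p-1}|\le (p-1)(\xi+\eta)^{p-2}|\xi-\eta|$, H\"older, and Lemma~\ref{le:est2}, so that the compact embedding $H^1\hookrightarrow L^\alpha$ gives $T(u_n)\to T(u)$ in $H^1$ without extracting subsequences; second, uniform $L^q$ bounds on $f_n=a\,u_n^{p-1}$ yield uniform $W^{2,q}$ bounds on $T(u_n)$ and hence relative compactness in $C^1_0(A)$ via $W^{2,q}\cap H^1_0\hookrightarrow C^1_0$ for $q>N$, which combined with the $H^1$ limit identifies the $C^1$ limit. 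You instead get strong convergence of the nonlinearity itself, $a\,u_n^{p-1}\to a\,u^{p-1}$ in $L^s(A)$ for every $s$, via a.e.\ convergence (along subsequences) plus Vitali with uniform integrability supplied by \eqref{eq:est3}, and then feed this into the linear elliptic estimate $\|T(u_n)-T(u)\|_{W^{2,s}}\le C_s\|a\,u_n^{p-1}-a\,u^{p-1}\|_{L^s}$ with $s>N$, obtaining the $C^1$ convergence in one shot (the subsequence issue being resolved by uniqueness of the limit). Both arguments rest entirely on Lemma~\ref{le:est2}; yours is a bit more streamlined, while the paper's first step gives a quantitative Lipschitz-type bound that avoids any subsequence extraction.

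One caveat on the membership $T(u)\in\mathcal K$: your first suggestion, to ``repeat the argument of Lemma~\ref{le:invarianceK}'' with right-hand side $a\,u^{p-1}$, is not justified as stated, because that lemma differentiates the equation pointwise and needs $h\in\mathcal K\subset C^1$, whereas here $u\in\widetilde{\mathcal K}$ is merely $H^1$ and $a\,u^{p-1}$ need not be $C^1$; making that route rigorous would require working with the weakly differentiated equation. Your alternative route is the right one and is exactly the paper's: the approximating ``$C^1$ elements of $\mathcal K$'' are simply the given $u_n$ themselves (no density of $\mathcal K$ in $\widetilde{\mathcal K}$ is available or needed), so once $T(u_n)\to T(u)$ is established, $T(\mathcal K)\subset\mathcal K$ together with the closedness of $\mathcal K$ in the $C^1$ topology (or of $\widetilde{\mathcal K}$ in $H^1$ plus elliptic regularity, as the paper does) yields $T(u)\in\mathcal K$. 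With that ordering made explicit, the proof is complete.
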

\begin{proof}
Let us first prove that $T(u_n)\to T(u)$ in $H^1(A)$. By the definition \eqref{eq:T_def} of $T$, we have
\begin{multline*}
\|T(u_{n})-T(u)\|_{H^1(A)}^2=\int_A  a(x)(u_{n}^{p-1}-u^{p-1})(T(u_{n})-T(u))\,dx \\
\leq (p-1)\|a\|_{L^\infty(A)} \int_A (u_{n}+u)^{p-2}|u_{n}-u||T(u_{n})-T(u)|\,dx,
\end{multline*}
where we used the inequality (see \cite{Damascelli1998})
\[
\left| \xi^{p-1} - \eta^{p-1} \right| \leq (p-1)\left(\xi+\eta\right)^{p-2}|\xi-\eta|\quad\text{for every }\xi,\eta\in \R^+, \ p\geq2.
\]
Let $\alpha>1$ and $\beta>\max \left\{N,\frac{1}{p-2}\right\}$ be such that
\[
\frac{1}{\alpha}+\frac{1}{\beta}+\frac{1}{2}=1.
\]
By the choice of $\beta$, it results $\alpha<2^*$ and $\beta(p-2)>1$, and so, Lemma \ref{le:est2} and the H\"older inequality applied to the previous expression provide
\begin{multline*}
\|T(u_{n})-T(u)\|_{H^1(A)}^2 \leq \\
C \left(\|u_{n}\|_{L^{\beta(p-2)}(A)}+\|u\|_{L^{\beta(p-2)}(A)}\right)^{p-2} \|u_{n}-u\|_{L^\alpha(A)}\|T(u_{n})-T(u)\|_{H^1(A)},
\end{multline*}
with $C=(p-1)\|a\|_{L^\infty(A)}$.
Since $\{u_n\}_n$ is weakly convergent, it is bounded in the $H^1(A)$-norm. Hence, by combining relation \eqref{eq:est3} with the previous estimate we deduce
\[
\|T(u_{n})-T(u)\|_{H^1(A)} \leq C' \|u_{n}-u\|_{L^\alpha(A)},
\]
for a constant $C'$ not depending on $n$.
Hence, the weak convergence $u_n\rightharpoonup u$ in $H^1(A)$ and $\alpha<2^*$ imply 
\[
\lim_{n\to+\infty} \|T(u_{n})-T(u)\|_{H^1(A)}=0,
\]
as desired. Since $\widetilde{\mathcal K}$ is closed, $T(u)\in \widetilde{\mathcal K}$ and, by elliptic regularity, $T(u)\in \mathcal{K}$.
\smallbreak
Now, let $v_n:= T(u_n)$ for $n \in \N$ and $v:= T(u)$. By the first  part of the proof, we know that $v_n \to v$ in $H^1(A)$. Hence it suffices to show that the sequence $\{v_n\}_n$ is relatively compact in $C^1_0(A)$. Let $q \in (1,\infty)$. By (\ref{a_assumptions}), Lemma~\ref{le:est2}, and the boundedness of $\{u_n\}_n$ in $H^1(A)$, we see that
\begin{equation}
 \label{eq:RHS-bound}
\|f_n\|_{L^q(A)} \le C_q \qquad \text{for }n \in \N \text{ with }f_n:= a(x) u_n^{p-1} 
\end{equation}
and some constant $C_q>0$. Since the functions $v_n \in H^1_0(A)$ solve $-\Delta v_n + v_n = f_n$ in $A$, elliptic regularity estimates show that 
$$
\|v_n\|_{W^{2,q}(A)} \le C_q' \qquad \text{for } n \in \N
$$
with some constant $C'_q>0$. Since, as $\partial A$ is smooth, we have a compact embedding $W^{2,q}(A) \cap H^1_0(A) \hookrightarrow C^1_0(A)$ for $q>N$, we conclude that the sequence $\{v_n\}_n$ is relatively compact in $C^1_0(A)$.
\end{proof}


\subsection{An equivalent minimax characterization}
We define the minimax value
\begin{equation}\label{eq:def-c-I}
d_I:= \inf_{\stackrel{u \in \mathcal{K}}{u \not\equiv 0}}\sup_{t>0}I(tu),
\end{equation}
where the functional $I$ is defined by \eqref{eq:I_def}. 
By elementary properties of $I$, it is easy to see that for every function $u \in C^1_0(A) \setminus \{0\}$ there exists precisely one critical point $t_u>0$ of the function $t \mapsto I(tu)$ which is the global maximum of this function on $(0,\infty)$ (see for example \cite[Chapter 4]{Willem}). More precisely,
\begin{equation}\label{eq:t_u_def}
t_u=\left(\frac{\|u\|_{H^1(A)}^2}{\int_A a(x)|u|^p\,dx}\right)^{1/(p-2)}
\end{equation}
and it is straightforward to verify that $t_u u \in \mathcal{N}_{\mathcal{K}}$.

\begin{lemma}\label{le:cI=dI}
The value $c_I$ introduced in \eqref{eq:Ic-equivalent} coincides with $d_I$.
\end{lemma}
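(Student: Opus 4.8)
The plan is to prove the two inequalities $c_I \le d_I$ and $d_I \le c_I$ separately, using the explicit formula \eqref{eq:t_u_def} for the Nehari projection $t_u$ and the key algebraic fact that along the ray $t\mapsto I(tu)$ the supremum is attained exactly at $t=t_u$, with value
\[
I(t_u u) = \sup_{t>0} I(tu) = \left(\frac12 - \frac1p\right)\frac{\|u\|_{H^1(A)}^{2p/(p-2)}}{\left(\int_A a(x)|u|^p\,dx\right)^{2/(p-2)}}.
\]
This identity, valid for every $u \in C^1_0(A)\setminus\{0\}$ with $\int_A a|u|^p\,dx > 0$ (which holds automatically since $a>0$ in $\overline A$ and $u\not\equiv 0$), is the computational backbone; it follows by differentiating $t\mapsto \frac{t^2}{2}\|u\|_{H^1(A)}^2 - \frac{t^p}{p}\int_A a|u|^p\,dx$ and substituting $t=t_u$.

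For $d_I \le c_I$: take any $u \in \mathcal{N}_{\mathcal K}$. Since $u$ satisfies $I'(u)u=0$, the formula \eqref{eq:t_u_def} forces $t_u = 1$, hence $I(u) = \sup_{t>0} I(tu) \ge \inf_{v\in\mathcal K, v\not\equiv0}\sup_{t>0}I(tv) = d_I$. Taking the infimum over $u \in \mathcal{N}_{\mathcal K}$ yields $c_I \ge d_I$.

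For $c_I \le d_I$: take any $u \in \mathcal{K}\setminus\{0\}$. As noted in the excerpt just before the lemma, $t_u u \in \mathcal{N}_{\mathcal K}$ (here one uses that $\mathcal K$ is a cone, so $t_u u \in \mathcal K$, together with $I'(t_u u)(t_u u)=0$ by definition of $t_u$). Therefore $c_I \le I(t_u u) = \sup_{t>0} I(tu)$. Taking the infimum over $u \in \mathcal{K}\setminus\{0\}$ gives $c_I \le d_I$. Combining the two inequalities yields $c_I = d_I$.

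I do not anticipate a genuine obstacle here: this is the standard equivalence between the Nehari infimum and the mountain-pass-over-rays value, and the only point requiring care is that everything must be carried out \emph{inside the cone} $\mathcal K$ rather than in all of $C^1_0(A)$. The cone structure is exactly what makes the argument go through: because $\mathcal K$ is invariant under multiplication by positive scalars (property (i) at the start of Section~\ref{sec2}), the ray $\{tu : t>0\}$ stays in $\mathcal K$ whenever $u\in\mathcal K$, so the projection $t_u u$ is an admissible competitor for $c_I$, and conversely every element of $\mathcal N_{\mathcal K}$ is its own projection. One should also remark that both infima are over nonempty sets — $\mathcal K$ contains nonzero functions (e.g. a radial bump), and hence $\mathcal N_{\mathcal K}\neq\emptyset$ — so the equality is between two well-defined (a priori possibly infinite, but by Theorem~\ref{thm:main_existence} positive and finite) real numbers.
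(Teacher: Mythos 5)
Your proof is correct and follows essentially the same route as the paper: both directions hinge on the unique projection $t_u$ from \eqref{eq:t_u_def}, the fact that $t_u u \in \mathcal N_{\mathcal K}$ for $u \in \mathcal K\setminus\{0\}$, and the observation that $t_u=1$ exactly when $u\in\mathcal N_{\mathcal K}$. The only cosmetic difference is that for the inequality $d_I\le c_I$ the paper normalizes to the unit sphere and invokes the bijection $u\mapsto t_u u$ between $\mathcal K\cap\mathcal S^1$ and $\mathcal N_{\mathcal K}$, whereas you argue directly that each $u\in\mathcal N_{\mathcal K}$ satisfies $I(u)=\sup_{t>0}I(tu)\ge d_I$, which is the same idea in slightly more streamlined form.
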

\begin{proof}
For every $u\in \mathcal{K} \setminus\{0\}$,  $t_u u \in \mathcal{N}_{\mathcal{K}}$, with $t_u$ as in \eqref{eq:t_u_def}, thus implying that 
\[
d_I=\inf_{u\in \mathcal{K}\setminus\{0\}} I(t_uu) \geq c_I.
\]
In order to prove the opposite inequality, notice that the map $H:u\in \mathcal{K}\cap \mathcal{S}^1 \mapsto t_u u \in \mathcal{N}_{\mathcal{K}}$, with $\mathcal{S}^1=\{u\in H^1_0(A): \, \|u\|_{H^1(A)}=1\}$, is bijective. Indeed, the map $v\in \mathcal{N}_{\mathcal{K}} \mapsto v/\|v\|_{H^1(A)} \in \mathcal{K}\cap \mathcal{S}^1$ is the inverse of $H$ by the uniqueness of $t_u$ and by the fact that $t_u=1$ if and only if $u\in \mathcal{N}_{\mathcal{K}}$. Therefore
\[
d_I\leq \inf_{u\in \mathcal{K}\cap \mathcal{S}^1} \sup_{t>0} I(tu)
= \inf_{u\in \mathcal{K}\cap \mathcal{S}^1} I(H(u))= c_I. \qedhere
\]
\end{proof}

\section{Proof of the main result}\label{sec3}

In this section, we give the proof of Theorem \ref{thm:main_existence} via a critical point theory approach in the space $C^1_0(A)$ and more precisely in the cone $\mathcal{K}$ introduced in \eqref{eq:K_tilde_def}. 
Although $\mathcal{K}$ is a subset of $C^1_0(A)$, we emphasize that our argument requires the use of both  the $C^1_0$ and the $H^1$ topology.
As already mentioned in the introduction, the solution is found as a fixed point of the operator $T$ given in \eqref{eq:T_def}, by means of a dynamical system point of view applied to a suitable descent flow.
For the reader's convenience, we divide the section in several subsections.

\subsection{Compactness and geometry of the functional $I$}
Let us consider the functional $I$ defined in \eqref{eq:I_def}. Recalling the definition of the operator $T$ given in \eqref{eq:T_def}, we observe that  
\[
I'(u)v= \int_A(\nabla u \cdot \nabla v + uv - a(x)|u|^{p-2}uv)\,dx=\langle u-T(u), v \rangle_{H^1(A)},
\]
for every $u,\,v\in C^1_0(A)$. 
We first show that $I$ satisfies a Palais-Smale type condition in $\mathcal{K}$, with respect to the $H^1$-norm.

\begin{lemma}\label{lemma:PS}
Let $\{u_n\}_{n}\subset\mathcal{K}$ be such that
\begin{itemize}
\item[(i)] $\{I(u_n)\}_{n}$ is bounded;
\item[(ii)] $\lim_{n\to+\infty} \|u_n-T(u_n)\|_{H^1(A)}=0$.
\end{itemize}
Then there exist a subsequence $\{u_{n_k}\}_{k}$ and $u\in \mathcal{K}$ such that
\[
\lim_{k\to+\infty} \|u_{n_k}-u\|_{H^1(A)}=0 \qquad\text{and}\qquad
u=T(u).
\]
\end{lemma}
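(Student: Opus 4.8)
The plan is to establish a Palais--Smale--type compactness statement for $I$ restricted to the cone $\mathcal{K}$, measured in the $H^1(A)$-norm, by exploiting the compactness of the operator $T$ proven in Proposition~\ref{prop:T_compact} together with the a priori bounds of Lemma~\ref{le:est2}. First I would show that the sequence $\{u_n\}_n$ is bounded in $H^1(A)$. This is the standard Nehari-type argument: from (ii) we have $\langle u_n - T(u_n), u_n\rangle_{H^1(A)} = o(1)\|u_n\|_{H^1(A)}$, i.e. $I'(u_n)u_n = \|u_n\|_{H^1(A)}^2 - \int_A a(x)|u_n|^p\,dx = o(1)\|u_n\|_{H^1(A)}$. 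Combining this with the boundedness of $I(u_n) = \frac12\|u_n\|_{H^1(A)}^2 - \frac1p\int_A a(x)|u_n|^p\,dx$ from (i), one eliminates the $L^p$-term: $\bigl(\frac12 - \frac1p\bigr)\|u_n\|_{H^1(A)}^2 = I(u_n) - \frac1p I'(u_n)u_n + o(1)\|u_n\|_{H^1(A)} \le C + o(1)\|u_n\|_{H^1(A)}$, and since $p>2$ the coefficient $\frac12-\frac1p$ is positive, so $\{u_n\}_n$ is bounded in $H^1(A)$.

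Next, using that $\widetilde{\mathcal K}$ is weakly closed (Lemma~\ref{le:Kcone}) and that $\mathcal{K}\subset\widetilde{\mathcal K}$, I extract a subsequence $\{u_{n_k}\}_k$ with $u_{n_k}\rightharpoonup u$ weakly in $H^1(A)$ for some $u\in\widetilde{\mathcal K}$. By Proposition~\ref{prop:T_compact}, this yields $T(u)\in\mathcal{K}$ and $T(u_{n_k})\to T(u)$ strongly in $C^1_0(A)$, hence also strongly in $H^1(A)$ (since $C^1_0(A)\subset H^1_0(A)$ continuously). Now combine this with hypothesis (ii): $u_{n_k} = (u_{n_k} - T(u_{n_k})) + T(u_{n_k}) \to 0 + T(u)$ strongly in $H^1(A)$. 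Therefore $u_{n_k}\to u$ strongly in $H^1(A)$ and $u = T(u)$. Finally, strong $H^1$-convergence forces $u\in\widetilde{\mathcal K}$ (by closedness), and since $u = T(u) = (-\Delta+\mathrm{Id})^{-1}(a|u|^{p-2}u)$ with $a|u|^{p-2}u\in\mathcal{K}$ (using $a\in\widehat{\mathcal K}$ and Lemma~\ref{le:est2}), elliptic regularity as in Lemma~\ref{le:invarianceK} gives $u\in C^1_0(A)$, hence $u\in\mathcal{K}$; alternatively one simply invokes that $T(u)\in\mathcal{K}$ was already established.

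The only delicate point — and the reason the statement is formulated with mixed topologies — is the mismatch between the space $C^1_0(A)$, in which $I$ is $C^2$ and in which the flow will live, and the $H^1(A)$-norm, in which compactness holds. Here, however, this is not an obstacle because hypothesis (ii) is stated directly in the $H^1(A)$-norm and Proposition~\ref{prop:T_compact} delivers $C^1_0$-convergence of $T(u_{n_k})$ (which in particular implies $H^1$-convergence) from merely weak $H^1$-convergence of $u_{n_k}$; so the argument closes cleanly. The main technical input one is leaning on is really Lemma~\ref{le:est2}: it is what makes $a(x)|u_n|^{p-2}u_n$ bounded in every $L^q(A)$ uniformly along the sequence, and hence what makes $T$ compact on $\mathcal{K}$ despite $p$ being supercritical. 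I would therefore present the proof in the order: (1) boundedness in $H^1$; (2) weak subsequential limit in $\widetilde{\mathcal K}$; (3) apply Proposition~\ref{prop:T_compact}; (4) conclude strong convergence and the fixed-point identity via hypothesis (ii); (5) note $u\in\mathcal{K}$.
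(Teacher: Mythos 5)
Your proof is correct and follows essentially the same route as the paper: boundedness in $H^1(A)$ from (i)--(ii) via the identity $I'(u_n)u_n=\langle u_n-T(u_n),u_n\rangle_{H^1(A)}$ and Cauchy--Schwarz, weak extraction with Lemma~\ref{le:Kcone}, Proposition~\ref{prop:T_compact} for the strong convergence of $T(u_{n_k})$, and then (ii) plus uniqueness of the weak limit to get $u_{n_k}\to u=T(u)\in\mathcal{K}$. The only superfluous detour is your first suggested justification that $u\in\mathcal{K}$ via $a|u|^{p-2}u\in\mathcal{K}$ (which would presuppose $C^1$-regularity of $u$); your alternative, namely that $T(u)\in\mathcal{K}$ was already given by Proposition~\ref{prop:T_compact}, is exactly what the paper uses and suffices.
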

\begin{proof}
By assumption (i) there exists a constant $C>0$ such that
\begin{multline}
C\geq I(u_n)=\left(\frac{1}{2}-\frac{1}{p}\right) \|u_n\|_{H^1(A)}^2 + 
\frac{1}{p}\left[\|u_n\|_{H^1(A)}^2-\int_A a(x) u_n^p \,dx \right] \\
=\left(\frac{1}{2}-\frac{1}{p}\right) \|u_n\|_{H^1(A)}^2 -\frac{1}{p} \langle T(u_n)-u_n,u_n\rangle_{H^1(A)} \\
\geq \left(\frac{1}{2}-\frac{1}{p}\right) \|u_n\|_{H^1(A)}^2 -\frac{1}{p} \| T(u_n)-u_n\|_{H^1(A)} \|u_n\|_{H^1(A)},
\end{multline}
for every $n\geq1$, where we also used the definition of $T$ (see \eqref{eq:T_def}) and the Cauchy-Schwarz inequality.
Now, the last inequality combined with assumption (ii) implies that the sequence $\{u_n\}$ is bounded in the $H^1(A)$-norm. We deduce the existence of a subsequence $\{u_{n_k}\}_{k}$ and $u\in H^1_0(A)$ such that $u_{n_k} \rightharpoonup u$ weakly in $H^1(A)$ as $k\to+\infty$. By Lemma \ref{le:Kcone}, $u \in \widetilde{\mathcal K}$. Then Proposition \ref{prop:T_compact} provides $T(u)\in\mathcal{K}$ and
\[
\lim_{k\to+\infty} \|T(u_{n_k})-T(u)\|_{H^1(A)}=0.
\]
In turn, using again assumption (ii), we obtain
\[
o(1)=\|T(u_{n_k})-u_{n_k}\|_{H^1(A)} =\|T(u)-u_{n_k}\|_{H^1(A)} +o(1)
\]
as $k\to+\infty$, from which we deduce both that $u_{n_k}$ converges to $u$ strongly in $H^1(A)$ and that $u=T(u)$. In particular, $u\in\mathcal{K}$.
\end{proof}

In the next lemma we prove that $I$ has a mountain pass type geometry.

\begin{lemma}
\label{isolated-minimum}
There exists $\alpha>0$ with the property that for 
\[
B_\alpha(\mathcal{K}):= \{u \in \mathcal{K} \::\: \|u\|_{H^1(A)} < \alpha\}, \qquad S_\alpha(\mathcal{K}) :=  \{u \in \mathcal{K} \::\: \|u\|_{H^1(A)} = \alpha\}
\] 
we have: 
\begin{enumerate}
\item[(i)] $I$ is nonnegative on $B_\alpha(\mathcal{K})$.
\item[(ii)] $\rho_\alpha:= \inf \limits_{u\in S_\alpha(\mathcal{K})} I(u)>0.$
\end{enumerate}
\end{lemma}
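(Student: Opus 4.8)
The plan is to exploit the elementary structure of $I$ restricted to the one–dimensional sets $\{tu : t>0\}$, together with the a priori bound from Lemma~\ref{le:est2}, which is exactly what replaces the usual Sobolev embedding $H^1_0(A)\hookrightarrow L^p(A)$ that fails in the supercritical regime. For $u\in\mathcal K$ with $\|u\|_{H^1(A)}=\alpha$ we can write
\[
I(u)=\frac12\|u\|_{H^1(A)}^2-\frac1p\int_A a(x)|u|^p\,dx\ge \frac12\alpha^2-\frac{\|a\|_{L^\infty(A)}}{p}\|u\|_{L^p(A)}^p .
\]
By Lemma~\ref{le:est2} (applied with $q=p$, noting $\mathcal K\subset\widetilde{\mathcal K}$) there is a constant $C=C(p)$ with $\|u\|_{L^p(A)}\le C\|u\|_{H^1(A)}=C\alpha$, hence
\[
I(u)\ge \frac12\alpha^2-\frac{\|a\|_{L^\infty(A)}C^p}{p}\,\alpha^p .
\]
Since $p>2$, the right–hand side is a function of $\alpha$ that behaves like $\tfrac12\alpha^2$ for small $\alpha$ and is therefore strictly positive for $\alpha>0$ sufficiently small; we fix such an $\alpha$. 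This immediately gives (ii) with $\rho_\alpha\ge \tfrac12\alpha^2-\tfrac{\|a\|_{L^\infty}C^p}{p}\alpha^p>0$.

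For (i) the same computation applies verbatim to every $v\in B_\alpha(\mathcal K)$: with $t:=\|v\|_{H^1(A)}<\alpha$ one still has $\|v\|_{L^p(A)}\le Ct$ by Lemma~\ref{le:est2}, so
\[
I(v)\ge \frac12 t^2-\frac{\|a\|_{L^\infty(A)}C^p}{p}\,t^p = t^2\Bigl(\frac12-\frac{\|a\|_{L^\infty(A)}C^p}{p}\,t^{p-2}\Bigr)\ge 0
\]
provided $\alpha$ was chosen so small that $\tfrac{\|a\|_{L^\infty(A)}C^p}{p}\,\alpha^{p-2}\le \tfrac12$; shrinking $\alpha$ if necessary we may assume this, and since the bound $I(v)\ge 0$ holds for all $t\le\alpha$ we obtain (i). (The case $v\equiv 0$ is trivial since $I(0)=0$.) Thus a single choice of $\alpha>0$ works for both (i) and (ii).

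The only point requiring a little care — and really the whole content of the lemma — is the replacement of the missing supercritical Sobolev embedding by the cone estimate \eqref{eq:est3}: it is legitimate precisely because $\mathcal K\subset\widetilde{\mathcal K}$, and it is what forces us to work inside the cone rather than in all of $H^1_0(A)$. Once \eqref{eq:est3} is invoked with $q=p$, everything reduces to the standard fact that $t\mapsto \tfrac12 t^2-ct^p$ with $p>2$, $c>0$ is positive on a right neighbourhood of $0$; no further estimates are needed. I do not anticipate any genuine obstacle here; the subtlety is conceptual (using the cone-specific bound) rather than technical.
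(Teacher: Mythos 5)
Your proof is correct and is essentially the same as the paper's: both estimate $I(u)\ge \tfrac12\|u\|_{H^1(A)}^2-\tfrac{C(p)^p}{p}\|a\|_{L^\infty(A)}\|u\|_{H^1(A)}^p$ via Lemma~\ref{le:est2} with $q=p$ and then use $p>2$ to choose $\alpha$ small. You merely spell out the elementary conclusion that the paper leaves implicit.
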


\begin{proof}
Let $u\in \mathcal{K}$. By Lemma~\ref{le:est2} with $q=p$, we get
\[
I(u)\ge \frac{1}{2}\|u\|^2_{H^1(A)}-\frac{1}{p}\|a\|_{L^\infty(A)}\|u\|^p_{L^p(A)}\ge \frac{1}{2}\|u\|^2_{H^1(A)}-\frac{C(p)^p}{p}\|a\|_{L^\infty(A)}\|u\|^p_{H^1(A)},
\]   
whence (i) and (ii) follow immediately, being $p>2$.
\end{proof}

\subsection{A descent flow in the cone}
In the following, we develop a descent flow argument inside the cone $\mathcal{K}$.
For every $v\in C^1_0(A)$, let 
\[
\Phi(v):=v-T(v),
\]
then $\Phi: C^1_0(A) \to C^1_0(A)$ is locally Lipschitz.
For every $u\in C^1_0(A)$, let $\eta(t,u)$ be the unique solution of the following initial value problem 
\begin{equation}\label{eq:eta_def}
\begin{cases}
\frac{d}{dt}\eta(t,u)=-\Phi(\eta(t,u))\\
\eta(0,u)=u,
\end{cases}
\end{equation}
defined on its maximal interval $[0,T_\mathrm{max}(u))$.

We observe that $T_\mathrm{max}(u)$ may be finite for some $u$, due to the fact that the right hand side of \eqref{eq:eta_def} is not normalised. We made this choice because a $C^1$ normalisation, that would have ensured existence of $\eta(t,u)$ for all times $t$ for every $u\in C^1_0(A)$, would have invalidated estimate \eqref{eq:decreasing-energy} below.

\begin{remark}\label{rem:continuous_dependence}
Being $\Phi$ locally Lipschitz, the solution of \eqref{eq:eta_def} depends continuously on the initial data (see for example \cite{D}).
That is, for every $u \in C^1_0(A)$, for every $\bar t < T_\mathrm{max}(u)$ and for every $\{v_n\} \subset C^1_0(A)$ such that $\| v_n - u \|_{C^1(A)} \to 0$, there exists $\bar{n} \geq 1$ such that, for every $n \geq \bar{n}$, the solution $\eta(t,v_n)$ is defined for every $t \in [0,\bar t]$ and 
\[
\sup_{t\in[0,\bar t]} \|\eta(t,v_n)-\eta(t,u)\|_{C^1(A)} \to 0, \quad \mbox{ as } n \to +\infty.
\]
\end{remark}

Let us show that the cone $\mathcal{K}$ is invariant under the action of the flow $\eta$.

\begin{lemma}
\label{positive-invariant-tilde-K}
For every $u\in \mathcal{K}$ and for every $t<T_\mathrm{max}(u)$, $\eta(t,u)\in\mathcal{K}$.
\end{lemma}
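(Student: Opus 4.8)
The plan is to show that $\mathcal{K}$ is forward-invariant under the flow $\eta$ defined by the ODE $\dot\eta = -\Phi(\eta) = -(\textnormal{Id}-T)(\eta)$, by exploiting the fact that $\mathcal{K}$ is a closed convex cone and that $T(\mathcal{K})\subset\mathcal{K}$ (which is \eqref{eq:T(K)}). The key observation is that the vector field $-\Phi(v) = T(v) - v$ points ``inward'' along $\mathcal{K}$ in the following sense: writing the ODE in the integral/Euler form, $\eta(t+h,u) \approx \eta(t,u) + h(T(\eta(t,u)) - \eta(t,u)) = (1-h)\eta(t,u) + h\,T(\eta(t,u))$, which for small $h>0$ is a convex combination of two elements of $\mathcal{K}$ (once we know $\eta(t,u)\in\mathcal{K}$), hence lies in $\mathcal{K}$. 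Making this rigorous is the heart of the proof.

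First I would fix $u\in\mathcal{K}$ and $\bar t \in (0, T_{\mathrm{max}}(u))$, and approximate the solution on $[0,\bar t]$ by an explicit Euler scheme: partition $[0,\bar t]$ into $n$ equal subintervals of length $h = \bar t/n$, and define $w_0 := u$, $w_{k+1} := w_k - h\,\Phi(w_k) = (1-h)w_k + h\,T(w_k)$. Provided $h<1$, an immediate induction using properties (i) and (ii) of the cone (closure under nonnegative scalar multiplication and under addition) together with $T(\mathcal{K})\subset\mathcal{K}$ shows that $w_k\in\mathcal{K}$ for all $k$. The standard convergence theory for the explicit Euler method applied to a locally Lipschitz vector field — using that $\eta([0,\bar t],u)$ is a compact subset of $C^1_0(A)$ on which $\Phi$ is Lipschitz and that the $w_k$ stay in a bounded neighborhood of this set for $n$ large — gives that the piecewise-constant (or piecewise-linear) interpolant $\eta_n$ of $\{w_k\}$ satisfies $\sup_{t\in[0,\bar t]}\|\eta_n(t) - \eta(t,u)\|_{C^1(A)} \to 0$ as $n\to\infty$. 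Since each $w_k\in\mathcal{K}$ and $\mathcal{K}$ is closed in $C^1_0(A)$ (property (iv)), the limit $\eta(t,u)$ lies in $\mathcal{K}$ for every $t\in[0,\bar t]$. As $\bar t < T_{\mathrm{max}}(u)$ was arbitrary, this proves the claim.

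Alternatively — and perhaps more cleanly — one can argue via a tangency/Nagumo-type subtangentiality condition: since $\mathcal{K}$ is closed and convex, forward invariance is equivalent to the subtangential condition $\lim_{h\to 0^+} \frac{1}{h}\,\mathrm{dist}_{C^1}\!\big(v + h(T(v)-v),\,\mathcal{K}\big) = 0$ for every $v\in\mathcal{K}$, and this holds trivially because $v + h(T(v)-v) = (1-h)v + h T(v)\in\mathcal{K}$ already for all $h\in[0,1]$, so the distance is exactly $0$. One then invokes the Nagumo–Brezis invariance theorem for locally Lipschitz vector fields on closed convex sets to conclude. I would present whichever of these two routes is shorter given what the paper is willing to cite; the Euler-scheme argument is fully self-contained.

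The main obstacle is a technical one rather than conceptual: one must be careful that the Euler iterates $w_k$ (or the flow itself) remain in the common domain of definition of $\Phi$ and stay uniformly bounded in $C^1(A)$ on $[0,\bar t]$, so that the Lipschitz constant controlling the Euler error is uniform. This is handled by working on a fixed compact neighborhood (in $C^1_0(A)$) of the trajectory $\{\eta(t,u): t\in[0,\bar t]\}$, which is compact as the continuous image of a compact interval, and noting that for $n$ large all iterates $w_k$ remain within this neighborhood by the error estimate itself (a standard bootstrap). The requirement $h<1$, i.e. $n > \bar t$, is harmless. Everything else — the inductive step $w_k\in\mathcal{K}\Rightarrow w_{k+1}\in\mathcal{K}$, and the passage to the limit using closedness of $\mathcal{K}$ — is routine given \eqref{eq:T(K)} and the cone axioms (i)--(iv).
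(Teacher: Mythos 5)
Your Euler-scheme argument is exactly the paper's proof: the paper also approximates the flow line by the explicit Euler polygonal, observes that each vertex stays in $\mathcal{K}$ because $w_{k+1}=(1-h)w_k+h\,T(w_k)$ is a combination of cone elements (using $T(\mathcal{K})\subset\mathcal{K}$ and convexity), invokes convergence of the polygonal via the local Lipschitz property of $\Phi$, and concludes by closedness of $\mathcal{K}$ in the $C^1$-topology. Your version, working on a fixed $[0,\bar t]\subset[0,T_{\mathrm{max}}(u))$, is if anything slightly more careful than the paper's sketch, and the Nagumo-type alternative you mention is a valid but unneeded variant.
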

\begin{proof}
The proof is analogous to the one in \cite[Lemma 4.5]{BNW} (see also \cite{CN,CC}). We briefly sketch it below for the sake of completeness. For every $n\in\mathbb N$, we consider the approximation of the flow line $t\in [0,T_{\mathrm{max}}(u))\mapsto \eta(t,u)$ given by the Euler polygonal  $t\in [0,T_{\mathrm{max}}(u))\mapsto \eta_n(t,u)$. The vertices of such polygonal $\eta_n$ are defined by the following recurrence formula:
\[
\begin{cases}
&\eta_n(0,u)=0\\
&\eta_n(t_{i+1},u):= \eta_n(t_{i},u)-\frac{T_{\mathrm{max}}(u)}{n}\Phi(\eta_n(t_i,u))\quad\mbox{for all }i=0,\dots, n-1,
\end{cases}
\]
where $t_i:= \frac{i}{n}T_{\mathrm{max}}(u)$ for every $i=0,\dots, k$. Recalling the definition of $\Phi$, since $T$ preserves the cone $\mathcal{K}$, it is easy to prove that the vertices of the polygonal $\eta_n$ belong to $\mathcal{K}$ by convexity. Hence, again by convexity, $\eta_n([0,T_{\mathrm{max}}(u)),u)\subset\mathcal{K}$ for every $n$. Finally, being $\Phi$ locally Lipschitz, the following convergence holds for every $t\in [0,T_{\mathrm{max}}(u))$
\[
\lim_{n\to+\infty}\|\eta_n(t,u)-\eta(t,u)\|_{C^1(A)}=0.
\]
The statement then follows immediately, being $\mathcal{K}$ closed in the $C^1$-topology.
\end{proof}

In the next lemma we prove that the energy functional $I$ decreases along the trajectories $\eta(\cdot,u)$. Moreover, we give a condition on $u$ sufficient to guarantee the global existence of $\eta(\cdot,u)$ and to construct a related Palais-Smale sequence.

\begin{lemma}
\label{energy-bound-compactness}
Let $u \in C^1_0(A)$. Then we have 
\begin{equation}
  \label{eq:decreasing-energy}
\frac{d}{dt} I(\eta(t,u))= -\|\Phi(\eta(t,u))\|_{H^1(A)}^2\quad \text{for every } 
t \in (0,T_\mathrm{max}(u)).
\end{equation}
Consequently, the functional $I$ is nonincreasing along the trajectories of $\eta$. Moreover, if 
\begin{equation}\label{eq:limit-condition}
u \in \mathcal{K}\qquad \text{and}\qquad c_u:= \lim_{t \to T_\mathrm{max}(u)} I(\eta(t,u)) > -\infty,
\end{equation}
then $T_\mathrm{max}(u)= \infty$, and there exists a sequence
$\{s_n\}_{n}\subset (0,+\infty)$ such that $\lim \limits_{n\to+\infty} s_n=+\infty$ and 
\begin{equation}\label{eq:u*-T(u*)}
\lim_{n\to+\infty}\|\Phi(w_n)\|_{H^1(A)} =0
\end{equation} 
with
\begin{equation}\label{eq:u_n_def}
w_n:=\eta(s_n,u) \qquad \text{for } n\geq1.
\end{equation}
\end{lemma}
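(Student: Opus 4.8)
The plan is to prove the three assertions of Lemma~\ref{energy-bound-compactness} in order: first the energy identity \eqref{eq:decreasing-energy}, then global existence $T_\mathrm{max}(u)=\infty$ under \eqref{eq:limit-condition}, and finally the construction of the Palais--Smale-type sequence $\{w_n\}$.

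For the energy identity, I would simply differentiate $t\mapsto I(\eta(t,u))$ using the chain rule. Since $I$ is $C^2$ on $C^1_0(A)$ and $t\mapsto\eta(t,u)$ is $C^1$ into $C^1_0(A)$, we get
\[
\frac{d}{dt}I(\eta(t,u))=I'(\eta(t,u))\Bigl[\tfrac{d}{dt}\eta(t,u)\Bigr]=I'(\eta(t,u))\bigl[-\Phi(\eta(t,u))\bigr].
\]
Then I would invoke the identity recorded just before Lemma~\ref{lemma:PS}, namely $I'(v)w=\langle v-T(v),w\rangle_{H^1(A)}=\langle \Phi(v),w\rangle_{H^1(A)}$, with $v=w=\eta(t,u)$ in the first slot, to obtain $-\|\Phi(\eta(t,u))\|_{H^1(A)}^2$. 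Monotonicity of $I$ along trajectories is then immediate since the right-hand side is $\le 0$.

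For global existence, the standard ODE dichotomy says that if $T_\mathrm{max}(u)<\infty$ then $\|\eta(t,u)\|_{C^1(A)}\to\infty$ as $t\to T_\mathrm{max}(u)$. The strategy is to derive a uniform $C^1$-bound on the trajectory under hypothesis \eqref{eq:limit-condition}, which forces $T_\mathrm{max}(u)=\infty$. First, integrating \eqref{eq:decreasing-energy} from $0$ to $t$ and using $c_u>-\infty$ gives $\int_0^{T_\mathrm{max}(u)}\|\Phi(\eta(s,u))\|_{H^1(A)}^2\,ds=I(u)-c_u<\infty$; in particular $\Phi(\eta(\cdot,u))\in L^2((0,T_\mathrm{max}(u));H^1(A))$. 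Combining this $L^2$-in-time control with the energy formula from Lemma~\ref{lemma:PS}'s proof, namely $I(v)=(\tfrac12-\tfrac1p)\|v\|_{H^1}^2-\tfrac1p\langle\Phi(v),v\rangle_{H^1}$, and Cauchy--Schwarz, one bounds $\|\eta(t,u)\|_{H^1(A)}$ along the trajectory. Indeed since $\eta(t,u)\in\mathcal{K}$ (Lemma~\ref{positive-invariant-tilde-K}) and $I(\eta(t,u))\ge c_u$, the inequality $(\tfrac12-\tfrac1p)\|\eta(t,u)\|_{H^1}^2\le I(\eta(t,u))+\tfrac1p\|\Phi(\eta(t,u))\|_{H^1}\|\eta(t,u)\|_{H^1}\le I(u)+\tfrac1p\|\Phi(\eta(t,u))\|_{H^1}\|\eta(t,u)\|_{H^1}$ gives a uniform bound on $\|\eta(t,u)\|_{H^1(A)}$ whenever $\|\Phi(\eta(t,u))\|_{H^1(A)}$ stays bounded; and the latter must be controlled to promote the $H^1$-bound to a $C^1$-bound. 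Here I would use the elliptic regularity/Lemma~\ref{le:est2} machinery already exploited in Proposition~\ref{prop:T_compact}: on the cone $\mathcal{K}$, an $H^1(A)$-bound on $\eta(t,u)$ yields an $L^q$-bound on $a\,\eta(t,u)^{p-1}$ for every $q$, hence a $W^{2,q}(A)$-bound on $T(\eta(t,u))$ and thus a $C^1_0(A)$-bound on $\Phi(\eta(t,u))=\eta(t,u)-T(\eta(t,u))$ --- wait, this still needs a $C^1$-bound on $\eta(t,u)$ itself. So instead I would run a Grönwall-type argument directly on $t\mapsto\|\eta(t,u)\|_{C^1(A)}$: from $\frac{d}{dt}\eta=-\eta+T(\eta)$ we get $\frac{d}{dt}\bigl(e^{t}\eta(t,u)\bigr)=e^{t}T(\eta(t,u))$, so $\eta(t,u)=e^{-t}u+\int_0^t e^{s-t}T(\eta(s,u))\,ds$, and since the $H^1$-bound on $\eta(s,u)$ (valid as long as things don't blow up, closed by a continuity/bootstrap argument) gives a uniform $C^1_0(A)$-bound on $T(\eta(s,u))$ via the Lemma~\ref{le:est2}+elliptic-regularity estimate, the integral representation yields $\|\eta(t,u)\|_{C^1(A)}\le\|u\|_{C^1(A)}+\sup_s\|T(\eta(s,u))\|_{C^1(A)}<\infty$ uniformly in $t<T_\mathrm{max}(u)$, contradicting blow-up. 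Hence $T_\mathrm{max}(u)=\infty$. I expect this step --- closing the bootstrap so that the $H^1$-bound, which rests on the cone structure and $c_u>-\infty$, feeds the elliptic regularity estimate to give a time-uniform $C^1$-bound --- to be the main technical obstacle, since one must be careful that the $H^1$-bound is available on the whole maximal interval (a standard continuation argument: the set of times where $\|\eta(t,u)\|_{H^1}\le M$ is relatively open and closed).

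Finally, for the Palais--Smale sequence: since $T_\mathrm{max}(u)=\infty$ and $\int_0^\infty\|\Phi(\eta(s,u))\|_{H^1(A)}^2\,ds=I(u)-c_u<\infty$, the integrand cannot stay bounded away from $0$, so $\liminf_{t\to\infty}\|\Phi(\eta(t,u))\|_{H^1(A)}=0$. Choosing $s_n\to\infty$ realizing this liminf and setting $w_n:=\eta(s_n,u)$ gives $\|\Phi(w_n)\|_{H^1(A)}\to 0$, which is exactly \eqref{eq:u*-T(u*)}--\eqref{eq:u_n_def}; note $w_n\in\mathcal{K}$ by Lemma~\ref{positive-invariant-tilde-K}. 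This last part is routine once global existence is established.
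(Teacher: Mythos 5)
Your treatment of the energy identity and of the extraction of the sequence $\{s_n\}$ is correct and essentially the paper's own argument (the paper computes $\frac{d}{dt}I(\eta(t,u))$ directly from the definition of $T$, and produces $\{s_n\}$ by the contrapositive of your integrability remark; these are cosmetic differences). The problematic step is the one you yourself flag: the time-uniform bound needed to rule out $T_\mathrm{max}(u)<\infty$. Your first attempt is not conclusive: the inequality $(\tfrac12-\tfrac1p)\|\eta(t,u)\|_{H^1}^2\le I(u)+\tfrac1p\|\Phi(\eta(t,u))\|_{H^1}\|\eta(t,u)\|_{H^1}$ only yields an $H^1$-bound if you already control $\|\Phi(\eta(t,u))\|_{H^1}$ \emph{pointwise} in time, and the $L^2$-in-time bound $\int_0^{T_*}\|\Phi(\eta(\tau,u))\|_{H^1}^2\,d\tau=I(u)-c_u$ does not give that. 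Your fallback, that the set of times where $\|\eta(t,u)\|_{H^1}\le M$ is relatively open and closed, is also unjustified: closedness is clear, but openness would require a differential inequality preventing the norm from crossing the level $M$, which you have not produced. So, as written, the global-existence step has a genuine gap.

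The gap is easy to close with an estimate you already have on the table: for $0\le s<t<T_*$, by \eqref{eq:decreasing-energy} and Cauchy--Schwarz,
\[
\|\eta(t,u)-\eta(s,u)\|_{H^1(A)}\le\int_s^t\Bigl\|\tfrac{d}{d\tau}\eta(\tau,u)\Bigr\|_{H^1(A)}d\tau\le\sqrt{t-s}\,\bigl[I(u)-c_u\bigr]^{1/2},
\]
so if $T_*<\infty$ the trajectory is bounded (indeed Cauchy) in $H^1(A)$ on $[0,T_*)$, with no bootstrap at all. From there your plan does work and is close to the paper's: since $\eta(\tau,u)\in\mathcal{K}$ by Lemma~\ref{positive-invariant-tilde-K}, Lemma~\ref{le:est2} and elliptic regularity (as in Proposition~\ref{prop:T_compact}) give a uniform $C^1_0(A)$-bound on $T(\eta(\tau,u))$, and the Duhamel formula $\eta(t,u)=e^{-t}u+\int_0^te^{s-t}T(\eta(s,u))\,ds$ then bounds $\|\eta(t,u)\|_{C^1(A)}$ uniformly on $[0,T_*)$, contradicting the blow-up alternative. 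The paper runs the same mechanism as a convergence statement rather than a bound: the $H^1$-Cauchy property gives $\eta(t,u)\to w$ in $H^1$, Proposition~\ref{prop:T_compact} upgrades this to $T(\eta(t,u))\to T(w)$ in $C^1_0(A)$, and the Duhamel formula then shows $\eta(t,u)$ converges in $C^1_0(A)$ as $t\to T_*$.
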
  
\begin{proof}
Let $T_*:= T_\mathrm{max}(u)$.
For $t \in (0,T_*)$ we have
\begin{align*}
\frac{d}{dt} I(\eta(t,u))&=\int_A \Bigl[\bigl[\nabla \eta(t,u) \cdot \nabla \bigl(T(\eta(t,u))-\eta(t,u)\bigr)+ \eta(t,u) \bigl(T(\eta(t,u))-\eta(t,u)\bigr)\bigr]\\
&- \int_A a(x) |\eta(t,u)|^{p-2}\eta(t,u)\bigl(T(\eta(t,u))-\eta(t,u)\bigr)\Bigr]  dx\\
&= \langle \eta(t,u), T(\eta(t,u))-\eta(t,u) \rangle_{H^1(A)} 
- \langle T(\eta(t,u)), T(\eta(t,u))-\eta(t,u) \rangle_{H^1(A)} \\
&= -\|T(\eta(t,u))-\eta(t,u)\|_{H^1(A)}^2,
\end{align*}
as claimed in (\ref{eq:decreasing-energy}). 

Next we assume that \eqref{eq:limit-condition} holds. In order to prove that $T_*=\infty$ we proceed by contradiction, thus assuming $T_*<\infty$, and consequently
\begin{equation}\label{eq:T*contradiction}
\lim_{t\to T_*^-} \|\eta(t,u)\|_{C^1(A)}=+\infty.
\end{equation}
For $0 \le s < t < T_*$, we then have, using \eqref{eq:decreasing-energy},
\begin{align*}
\|\eta(t,u)&-\eta(s,u)\|_{H^1(A)} \le \int_s^t \left\|\frac{d}{d\tau}\eta(\tau,u)\right\|_{H^1(A)}\,d \tau = \int_s^t \sqrt{-\frac{d}{d\tau} I(\eta(\tau,u))}\,d\tau\\
&\le \sqrt{t-s} \, \sqrt{-\int_s^t \frac{d}{d\tau} I(\eta(\tau,u))\,d\tau}
= \sqrt{t-s}\, [I(\eta(s,u))-I(\eta(t,u))]^{\frac{1}{2}}\\
&\le  \sqrt{t-s} \, [I(u)-c_u]^{\frac{1}{2}}.
\end{align*}
Since, by our contradiction assumption, $T_*<\infty$, we deduce that for every sequence $\{t_n\}_{n} \subset (0,T_*)$ such that $t_n\to T_*^-$ as $n\to\infty$, $\{\eta(t_n,u)\}_{n} $ is a Cauchy sequence. This implies that there exists $w \in H^1_0(A)$ such that
\[
\lim_{t \to T_*} \|\eta(t,u)-w\|_{H^1(A)}=0.
\]
Consequently, by Proposition~\ref{prop:T_compact},
\[
T(w) \in C^1_0(A) \qquad \text{and} \qquad 
\lim_{t \to T_*} \|T(\eta(t,u)) - T(w)\|_{C^1(A)}=0.
\]
From this, by differentiating $e^t \eta(t,u)$, we deduce that
\begin{align*}
\eta(t,u) &= e^{-t}\Bigl(u + \int_0^t e^{s}T(\eta(s,u))\,ds\Bigr)\\
&\to e^{-T_*}\Bigl(u + \int_0^{T_*} e^{s}T(\eta(s,u))\,ds\Bigr) \qquad \text{in } C^1_0(A) \text{ as } t \to T_*.
\end{align*}
By uniqueness of the limit we have that the right hand side above coincides with $w$ 
and a posteriori it follows that $\eta(t,u) \to w$ in $C^1_0(A)$ as $t \to T_*$. This contradicts \eqref{eq:T*contradiction}, hence it follows that $T_* =\infty$. 

To show the existence of a sequence $\{w_n\}_{n}$ with the asserted properties, we argue by contradiction again and assume that there exists $t_0,\delta_0>0$ with the property that 
\[
\|\Phi(\eta(t,u))\|_{H^1(A)}\ge \delta_0 \qquad \text{for $t \ge t_0$.} 
\]
By (\ref{eq:decreasing-energy}), we then deduce that 
\[
I(\eta(t_0,u))- I(\eta(t,u)) \ge (t-t_0) \delta_0^2 \to \infty \qquad \text{as }t \to T_* = \infty,
\]
which contradicts assumption \eqref{eq:limit-condition}.
Hence there exists a sequence $\{s_n\}_{n}\subset (0,+\infty)$ with the required properties.
\end{proof}

Given Lemmas \ref{lemma:PS} and \ref{energy-bound-compactness}, it only remains to exhibit $u\in \mathcal{K}$ satisfying \eqref{eq:limit-condition} and the additional condition that the related Palais-Smale sequence does not converge to zero. This is the content of the next subsection.

\subsection{A dynamical systems point of view}

Partially inspired by \cite{B2001}, we show that the mountain pass geometry of the functional $I$ allows to construct a subset of $\mathcal{K}$ that is invariant for the flow and with the property that $I$ is strictly positive over this set, see Lemma \ref{positive-invariance-z-alpha} below. This set is defined as the boundary of a certain domain of attraction for the flow $\eta$.

Let $\alpha$ be given as in Lemma~\ref{isolated-minimum}. We define
\[
L_\alpha: = \{u \in B_\alpha(\mathcal{K}) \::\ I(u) < \rho_\alpha\}.
\]
It is not difficult to check that $L_\alpha$ is relatively open in $\mathcal{K}$ with respect to the $C^1$-norm, that is to say, for every $u\in L_\alpha$ there exists $\varepsilon>0$ such that
\begin{equation}\label{eq:Lopen}
\{v\in C^1_0(A) \::\ \|v-u\|_{C^1(A)} <\varepsilon \} \cap \mathcal{K} \subset L_\alpha.
\end{equation}
Moreover, $L_\alpha$ has the following positive invariance property.

\begin{lemma}
\label{positive-invariance-l-alpha}
For $u \in L_\alpha$, we have 
$T_\mathrm{max}(u)= \infty$ and $\eta(t,u) \in L_\alpha$ for all $t \ge 0$.  
\end{lemma}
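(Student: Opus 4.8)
The plan is to combine the positive invariance of the cone $\mathcal{K}$ under the flow (Lemma~\ref{positive-invariant-tilde-K}), the energy decay along trajectories (Lemma~\ref{energy-bound-compactness}), and the mountain pass geometry (Lemma~\ref{isolated-minimum}), to show that a trajectory starting in $L_\alpha$ can never leave $L_\alpha$, and in particular cannot blow up in finite time. First I would observe that, since $u \in L_\alpha \subset \mathcal{K}$, Lemma~\ref{positive-invariant-tilde-K} gives $\eta(t,u) \in \mathcal{K}$ for all $t < T_\mathrm{max}(u)$, and by Lemma~\ref{energy-bound-compactness} the map $t \mapsto I(\eta(t,u))$ is nonincreasing, so $I(\eta(t,u)) \le I(u) < \rho_\alpha$ for all such $t$. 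Thus the energy bound defining $L_\alpha$ is preserved for free; the only thing that can fail is the norm bound $\|\eta(t,u)\|_{H^1(A)} < \alpha$.

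Next I would argue that the norm bound is also preserved, by a continuity/connectedness argument. Suppose, for contradiction, that there is a first time $t_0 \in (0,T_\mathrm{max}(u))$ at which $\|\eta(t_0,u)\|_{H^1(A)} = \alpha$, so that $\eta(t_0,u) \in S_\alpha(\mathcal{K})$ while $\eta(t,u) \in B_\alpha(\mathcal{K})$ for $t \in [0,t_0)$. By definition of $\rho_\alpha$ in Lemma~\ref{isolated-minimum}(ii) we would have $I(\eta(t_0,u)) \ge \rho_\alpha$; on the other hand, by the monotonicity just established, $I(\eta(t_0,u)) \le I(u) < \rho_\alpha$, a contradiction. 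Hence $\eta(t,u)$ stays in $B_\alpha(\mathcal{K})$, and combined with the energy bound it stays in $L_\alpha$, for all $t < T_\mathrm{max}(u)$. (Here one should be slightly careful that $t \mapsto \|\eta(t,u)\|_{H^1(A)}$ is continuous: this follows since $\eta(\cdot,u)$ is $C^1$ into $C^1_0(A) \hookrightarrow H^1_0(A)$, so such a first exit time $t_0$ exists if the bound is ever violated.)

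Finally, to conclude $T_\mathrm{max}(u) = \infty$, I would invoke the second half of Lemma~\ref{energy-bound-compactness}: we have just shown $\eta(t,u) \in \mathcal{K}$ for all $t < T_\mathrm{max}(u)$, and $I(\eta(t,u)) \ge 0$ on $B_\alpha(\mathcal{K})$ by Lemma~\ref{isolated-minimum}(i), so $c_u = \lim_{t \to T_\mathrm{max}(u)} I(\eta(t,u)) \ge 0 > -\infty$; condition~\eqref{eq:limit-condition} is therefore satisfied, and Lemma~\ref{energy-bound-compactness} yields $T_\mathrm{max}(u) = \infty$. The main (minor) obstacle is making the "first exit time" argument rigorous — ensuring continuity of the $H^1$-norm along the flow and handling the boundary case cleanly — but this is routine given the regularity of $\eta$ and the strict inequality $I(u) < \rho_\alpha$. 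No genuinely hard step is expected here; the lemma is essentially a bookkeeping consequence of the three preceding lemmas.
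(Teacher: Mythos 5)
Your argument is correct and follows essentially the same route as the paper: invariance of $\mathcal{K}$ (Lemma~\ref{positive-invariant-tilde-K}), monotonicity of $I$ along the flow, a continuity/intermediate-value argument showing the trajectory would have to cross $S_\alpha(\mathcal{K})$ with energy below $\rho_\alpha$ (contradicting Lemma~\ref{isolated-minimum}(ii)), and then Lemma~\ref{isolated-minimum}(i) plus condition~\eqref{eq:limit-condition} in Lemma~\ref{energy-bound-compactness} to get $T_\mathrm{max}(u)=\infty$. The paper's proof differs only cosmetically (it picks an arbitrary exit time $t_1$ and locates a crossing $t_0\le t_1$ rather than a ``first'' exit time), so no further comment is needed.
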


\begin{proof}
By Lemma~\ref{positive-invariant-tilde-K}, we know that $\eta(t,u) \in \mathcal{K}$ for all $t \in (0,T_\mathrm{max}(u))$. Suppose by contradiction that there exists $t_1\in(0,T_{\mathrm{max}}(u))$ such that $\eta(t_1,u)\not\in L_\alpha$. Since, by Lemma~\ref{energy-bound-compactness}, $I(\eta(t_1,u)) \le  I(u) < \rho_\alpha$, necessarily $\|\eta(t_1,u)\|_{H^1(A)}\ge\alpha$. We observe that the map $t\in[0,T_{\mathrm{max}}(u))\mapsto \|\eta(t,u)\|_{H^1(A)}$ is continuous, by virtue of the continuous embedding $C^1(A)\hookrightarrow H^1(A)$, therefore there exists $t_0\in (0,t_1]$ such that $\eta(t_0,u)\in S_\alpha(\mathcal{K})$. This contradicts Lemma~\ref{isolated-minimum}(ii), being $I(\eta(t_0,u)) < \rho_\alpha$. Consequently,
$\eta(t,u) \in L_\alpha$ for all $t \in (0,T_\mathrm{max}(u))$, and therefore 
$\lim \limits_{t \to T_\mathrm{max}(u)} I(\eta(t,u)) \ge 0$ by Lemma~\ref{isolated-minimum}(i). 
Hence $T_\mathrm{max}(u)= \infty$ by Lemma~\ref{energy-bound-compactness}. 
\end{proof}

Next we consider the domain of attraction of $L_\alpha$ in $\mathcal{K}$, more precisely
\[
D(L_\alpha):= \{u \in \mathcal{K}\::\: \eta(t,u) \in L_\alpha \text{ for some }t \in (0,T_\mathrm{max}(u)) \}.
\]
We notice that Lemma \ref{positive-invariance-l-alpha} implies that 
\begin{equation}\label{eq:DLalpha-invariance}
\text{if } u\in D(L_\alpha) \text{ then } T_\mathrm{max}(u)= \infty \text{ and }\eta(t,u) \in D(L_\alpha) \text{ for all }t \ge 0.
\end{equation}
Moreover, Lemmas \ref{isolated-minimum} (i), \ref{energy-bound-compactness} and \ref{positive-invariance-l-alpha} provide
\begin{equation}\label{eq:dl_alpha_I_positive}
\inf_{u\in D(L_\alpha)} I(u) \geq0.
\end{equation} 

\begin{lemma}\label{lem:DLopen}
$D(L_\alpha)$ is relatively open in $\mathcal{K}$ with respect to the $C^1$-norm, that is to say, for every $u\in D(L_\alpha)$ there exists $\delta>0$ such that
\[
\{v\in C^1_0(A) \::\ \|v-u\|_{C^1(A)} <\delta \} \cap \mathcal{K} \subset D(L_\alpha).
\]
\end{lemma}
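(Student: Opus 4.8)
The plan is to use the continuous dependence of the flow on initial data (Remark~\ref{rem:continuous_dependence}) together with the fact that $L_\alpha$ is relatively open in $\mathcal{K}$ (property~\eqref{eq:Lopen}). Let $u \in D(L_\alpha)$. By definition there exists $\bar t \in (0,T_\mathrm{max}(u))$ with $\eta(\bar t, u) \in L_\alpha$. By \eqref{eq:Lopen}, there is $\varepsilon > 0$ such that every $w \in \mathcal{K}$ with $\|w - \eta(\bar t,u)\|_{C^1(A)} < \varepsilon$ belongs to $L_\alpha$. Now by Remark~\ref{rem:continuous_dependence}, since $\bar t < T_\mathrm{max}(u)$, there exists $\delta > 0$ such that for every $v \in C^1_0(A)$ with $\|v - u\|_{C^1(A)} < \delta$ the flow $\eta(\cdot,v)$ is defined on all of $[0,\bar t]$ and $\|\eta(\bar t, v) - \eta(\bar t, u)\|_{C^1(A)} < \varepsilon$.

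Next I would restrict attention to such $v$ that additionally lie in $\mathcal{K}$. For these, Lemma~\ref{positive-invariant-tilde-K} guarantees $\eta(\bar t, v) \in \mathcal{K}$, and hence the two conditions $\|\eta(\bar t,v) - \eta(\bar t,u)\|_{C^1(A)} < \varepsilon$ and $\eta(\bar t,v) \in \mathcal{K}$ combine, via the choice of $\varepsilon$, to give $\eta(\bar t, v) \in L_\alpha$. Since $\bar t \in (0, T_\mathrm{max}(v))$, this is exactly the statement that $v \in D(L_\alpha)$. Thus $\{v \in C^1_0(A) \::\ \|v - u\|_{C^1(A)} < \delta\} \cap \mathcal{K} \subset D(L_\alpha)$, which is the claim.

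I do not anticipate a serious obstacle here: the proof is essentially a bookkeeping exercise assembling three facts already available in the excerpt. The only mild subtlety is the interplay of the two topologies — one must make sure that the neighborhood produced by continuous dependence is taken in the $C^1$-norm (which it is, as Remark~\ref{rem:continuous_dependence} is stated in that norm) so that it matches the $C^1$-openness of $L_\alpha$ in \eqref{eq:Lopen}; and one must invoke the cone invariance Lemma~\ref{positive-invariant-tilde-K} to ensure $\eta(\bar t,v)$ actually lands in $\mathcal{K}$, since $L_\alpha$ is only characterized as a subset of $\mathcal{K}$. Care should also be taken that $\bar t$ is chosen strictly less than $T_\mathrm{max}(u)$ (possible since the defining $t$ in $D(L_\alpha)$ lies in the open interval $(0,T_\mathrm{max}(u))$), so that Remark~\ref{rem:continuous_dependence} applies with this $\bar t$.
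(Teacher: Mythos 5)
Your proposal is correct and follows essentially the same argument as the paper: pick $\bar t$ with $\eta(\bar t,u)\in L_\alpha$, use the relative $C^1$-openness of $L_\alpha$ from \eqref{eq:Lopen} to get $\varepsilon$, and use continuous dependence (Remark~\ref{rem:continuous_dependence}) to get $\delta$; the only difference is that you make explicit the appeal to Lemma~\ref{positive-invariant-tilde-K} to ensure $\eta(\bar t,v)\in\mathcal{K}$, a step the paper leaves implicit.
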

\begin{proof}
Let $u\in D(L_\alpha)$. By definition there exists $t_0\in [0,T_\mathrm{max}(u))$ such that $\eta(t_0,u)\in L_\alpha$. On the one hand, being $L_\alpha$ relatively open in $\mathcal{K}$ with respect to the $C^1$-norm (see \eqref{eq:Lopen}), there exists $\varepsilon>0$ such that
\begin{equation}\label{eq:DLopen1}
\{w\in C^1_0(A) \::\ \|w-\eta(t_0,u) \|_{C^1(A)} <\varepsilon \} \cap \mathcal{K} \subset L_\alpha.
\end{equation}
On the other hand, given such $\varepsilon>0$, by Remark \ref{rem:continuous_dependence} there exists $\delta>0$ such that
\begin{equation}\label{eq:DLopen2}
v\in C^1_0(A), \ \|v-u\|_{C^1(A)}<\delta \quad\text{implies}\quad 
\|\eta(t_0,v)-\eta(t_0,u)\|_{C^1(A)}<\varepsilon.
\end{equation}
By combining \eqref{eq:DLopen1} and \eqref{eq:DLopen2}, we deduce that such $\delta>0$ satisfies the requested properties.
\end{proof}

We denote by $Z_\alpha$ the relative boundary of $D(L_\alpha)$ in $\mathcal{K}$ with respect to the $C^1$-norm.
In view of Lemma \ref{lem:DLopen} and of the fact that $\mathcal{K}$ is closed with respect to the $C^1$-topology, we have more explicitly
\begin{equation}\label{eq:Z_alpha_def}
Z_\alpha:=\overline{D(L_\alpha)}\setminus D(L_\alpha),
\end{equation}
where $\overline{D(L_\alpha)}$ denotes the standard closure of $D(L_\alpha)$ in $C^1_0(A)$ with respect to the $C^1$-norm.

\begin{lemma}\label{lem:Z_alpha_not_empty}
The set $Z_\alpha$ defined in \eqref{eq:Z_alpha_def} is not empty. More precisely, for every $\psi\in \mathcal{K} \setminus \{0\}$ there exists $t_*>0$ such that $t_*\psi\in Z_\alpha$.
\end{lemma}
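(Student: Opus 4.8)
\emph{Proof strategy.} Fix $\psi\in\mathcal K\setminus\{0\}$. The plan is to travel along the ray $t\mapsto t\psi$ and single out the \emph{first time} at which it leaves the (relatively open) set $D(L_\alpha)$; that exit point, which by construction lies on the relative boundary $Z_\alpha$, will be the desired element. First I would check that $t\psi\in D(L_\alpha)$ for all sufficiently small $t>0$. Since $\mathcal K$ is a cone we have $t\psi\in\mathcal K$ for every $t>0$, and moreover $\|t\psi\|_{H^1(A)}=t\|\psi\|_{H^1(A)}\to0$ and $I(t\psi)\to I(0)=0$ as $t\to0^+$; hence there is $\varepsilon>0$ with $\|t\psi\|_{H^1(A)}<\alpha$ and $I(t\psi)<\rho_\alpha$ for all $t\in(0,\varepsilon]$, i.e.\ $t\psi\in L_\alpha$. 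By Lemma~\ref{positive-invariance-l-alpha}, any point of $L_\alpha$ is flowed by $\eta$ inside $L_\alpha$ for all positive times, so $L_\alpha\subset D(L_\alpha)$ and therefore $t\psi\in D(L_\alpha)$ for $t\in(0,\varepsilon]$.

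Next I would show that $t\psi\notin D(L_\alpha)$ for all large $t$. Because $\psi\ge0$, $\psi\not\equiv0$ and $a>0$ on $\overline A$, we have $\int_A a(x)|\psi|^p\,dx>0$, so that
\[
I(t\psi)=\frac{t^2}{2}\|\psi\|_{H^1(A)}^2-\frac{t^p}{p}\int_A a(x)|\psi|^p\,dx\longrightarrow-\infty\qquad\text{as }t\to+\infty,
\]
using $p>2$. Comparing this with the lower bound $\inf_{u\in D(L_\alpha)}I(u)\ge0$ from \eqref{eq:dl_alpha_I_positive}, we conclude that $t\psi\notin D(L_\alpha)$ once $t$ is large enough. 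Combining the two observations, the number
\[
t_*:=\inf\{t>0\::\:t\psi\notin D(L_\alpha)\}
\]
is well defined and satisfies $0<t_*<\infty$, and $t\psi\in D(L_\alpha)$ for every $t\in(0,t_*)$.

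It then remains to verify that $t_*\psi\in Z_\alpha=\overline{D(L_\alpha)}\setminus D(L_\alpha)$. Since $t\psi\to t_*\psi$ in the $C^1$-norm as $t\uparrow t_*$ and $t\psi\in D(L_\alpha)$ there, we get $t_*\psi\in\overline{D(L_\alpha)}$. On the other hand, if $t_*\psi$ belonged to $D(L_\alpha)$, then by the relative $C^1$-openness of $D(L_\alpha)$ in $\mathcal K$ (Lemma~\ref{lem:DLopen}) there would be $\delta>0$ with $\{v\in C^1_0(A)\::\:\|v-t_*\psi\|_{C^1(A)}<\delta\}\cap\mathcal K\subset D(L_\alpha)$, and since $t\psi\in\mathcal K$ and $\|t\psi-t_*\psi\|_{C^1(A)}=|t-t_*|\,\|\psi\|_{C^1(A)}$, this would force $t\psi\in D(L_\alpha)$ for all $t$ in a full neighbourhood of $t_*$, contradicting the definition of $t_*$ as an infimum. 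Hence $t_*\psi\in Z_\alpha$, and in particular $Z_\alpha\neq\emptyset$.

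I do not expect a genuine obstacle here: this is just a one-dimensional first-exit argument, and all the real content has already been packaged into the mountain-pass geometry (Lemma~\ref{isolated-minimum}), the flow-invariance of $L_\alpha$ (Lemma~\ref{positive-invariance-l-alpha}), the openness of $D(L_\alpha)$ (Lemma~\ref{lem:DLopen}), and the energy bound \eqref{eq:dl_alpha_I_positive}. The one point deserving attention is the bookkeeping of topologies: the openness of $D(L_\alpha)$ and the closure defining $Z_\alpha$ refer to the $C^1$-norm, so the limits along the ray must be taken in $C^1(A)$ — which is harmless, as $t\mapsto t\psi$ is $C^1$-continuous and $C^1$-convergence is stronger than $H^1$-convergence.
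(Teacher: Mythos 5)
Your argument is correct and follows essentially the same route as the paper: small multiples of $\psi$ lie in $D(L_\alpha)$ (you verify $t\psi\in L_\alpha$ directly, the paper uses $0\in D(L_\alpha)$ plus Lemma~\ref{lem:DLopen}), large multiples are excluded via $I(t\psi)\to-\infty$ and \eqref{eq:dl_alpha_I_positive}, and the extremal scalar along the ray lands on $Z_\alpha$ by the $C^1$-openness of $D(L_\alpha)$. The only cosmetic difference is that you take the infimum of exit times while the paper takes $\sup\{t\ge 0: t\psi\in D(L_\alpha)\}$, which changes nothing of substance.
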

\begin{proof}
For $\psi\in \mathcal{K}\setminus\{0\}$, let
\[
J_\psi:=\{ t\geq0:\, t\psi \in D(L_\alpha)\}.
\]
On the one hand, there exists $\varepsilon>0$ such that $[0,\varepsilon)\subset J_\psi$ because $0 \in L_\alpha \subset D(L_\alpha)$ and $D(L_\alpha)$ is relatively open in $\mathcal{K}$ in virtue of Lemma \ref{lem:DLopen}. On the other hand, $J_\psi$ is bounded, as there exists $\bar t>0$ such that $I(t\psi)\le -1$ for every $t\geq\bar{t}$, which implies that $t \psi \not \in D(L_\alpha)$  for every $t\geq\bar{t}$ by virtue of \eqref{eq:dl_alpha_I_positive}. As a consequence, we have that
\[
t_*:=\sup J_\psi \in (0,\infty).
\]
Then $t_*\psi\in Z_\alpha$, by definition of $Z_\alpha$.
\end{proof}

By the continuity of the flow $\eta$ with respect to the $C^1$-norm (see Remark \ref{rem:continuous_dependence}), the following property is a consequence of Lemmas~\ref{energy-bound-compactness} and \ref{positive-invariance-l-alpha}.

\begin{lemma}\label{positive-invariance-z-alpha}
For $u \in Z_\alpha$, we have 
$T_\mathrm{max}(u)= \infty$ and 
$$
\eta(t,u) \in Z_\alpha, \quad I(\eta(t,u)) \ge \rho_\alpha
\qquad \text{for all }t \ge 0.
$$
\end{lemma}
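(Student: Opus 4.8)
The plan is to prove Lemma~\ref{positive-invariance-z-alpha} by combining the global-existence and invariance facts already established for $L_\alpha$ and $D(L_\alpha)$ with the continuity of the flow in the $C^1$-topology. The three assertions to establish for $u \in Z_\alpha$ are: (a) $T_\mathrm{max}(u) = \infty$; (b) $\eta(t,u) \in Z_\alpha$ for all $t \ge 0$; and (c) $I(\eta(t,u)) \ge \rho_\alpha$ for all $t \ge 0$.

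First I would handle global existence. Since $Z_\alpha \subset \overline{D(L_\alpha)}$ and $\mathcal{K}$ is $C^1$-closed, any $u \in Z_\alpha$ is a $C^1$-limit of a sequence $\{u_n\} \subset D(L_\alpha)$ with $u_n \in \mathcal{K}$. By \eqref{eq:dl_alpha_I_positive} we have $I(u_n) \ge 0$ for all $n$, and by continuity of $I$ on $C^1_0(A)$ we get $I(u) \ge 0$; combined with the monotonicity of $t \mapsto I(\eta(t,u))$ from Lemma~\ref{energy-bound-compactness}, this gives $c_u = \lim_{t \to T_\mathrm{max}(u)} I(\eta(t,u)) \ge 0 > -\infty$, so the hypothesis \eqref{eq:limit-condition} is satisfied and Lemma~\ref{energy-bound-compactness} yields $T_\mathrm{max}(u) = \infty$. (One should note $u \in Z_\alpha \subset \mathcal{K}$, so $\eta(t,u) \in \mathcal{K}$ for all $t$ by Lemma~\ref{positive-invariant-tilde-K}.)

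Next, invariance of $Z_\alpha$. Fix $u \in Z_\alpha$ and $t \ge 0$; I must show $\eta(t,u) \in \overline{D(L_\alpha)} \setminus D(L_\alpha)$. For membership in the closure: take $u_n \to u$ in $C^1$ with $u_n \in D(L_\alpha)$; by \eqref{eq:DLalpha-invariance} each $\eta(t,u_n) \in D(L_\alpha)$, and by continuous dependence on initial data (Remark~\ref{rem:continuous_dependence}, applicable since $T_\mathrm{max}(u) = \infty > t$) we have $\eta(t,u_n) \to \eta(t,u)$ in $C^1$, so $\eta(t,u) \in \overline{D(L_\alpha)}$. For the exclusion $\eta(t,u) \notin D(L_\alpha)$: suppose otherwise. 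Since $D(L_\alpha)$ is relatively $C^1$-open in $\mathcal{K}$ (Lemma~\ref{lem:DLopen}) and $\eta(t,\cdot)$ is $C^1$-continuous at $u$, there is a $C^1$-neighborhood $V$ of $u$ in $\mathcal{K}$ with $\eta(t,V) \subset D(L_\alpha)$; then for $v \in V$, applying \eqref{eq:DLalpha-invariance} backward along the flow — more precisely using that $\eta(t,v) \in D(L_\alpha)$ means some later-time image lands in $L_\alpha$, hence $v$ itself belongs to $D(L_\alpha)$ by definition of $D(L_\alpha)$ — we conclude $V \subset D(L_\alpha)$, contradicting $u \in Z_\alpha = \overline{D(L_\alpha)} \setminus D(L_\alpha)$. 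Thus $\eta(t,u) \in Z_\alpha$.

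Finally, the energy lower bound. For $u \in Z_\alpha$ and any $t \ge 0$, by the invariance just proved $\eta(t,u) \in Z_\alpha \subset \overline{D(L_\alpha)}$, so there are $w_n \in D(L_\alpha)$ with $w_n \to \eta(t,u)$ in $C^1$. Each $w_n$ satisfies: $\eta(s,w_n) \in L_\alpha$ for some $s \ge 0$, hence $\|\eta(s,w_n)\|_{H^1(A)} < \alpha$; I claim $w_n \notin B_\alpha(\mathcal{K})$ is \emph{not} what I want — rather I use that $w_n \in \partial$-type behaviour isn't available, so instead: the point is that $w_n$ cannot lie in $L_\alpha$ together with its whole future unless... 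Let me use the cleaner route. Each $\eta(t,u)$ with $\eta(t,u)\in Z_\alpha$ cannot be in $D(L_\alpha)$, in particular $\eta(t,u)\notin L_\alpha$; since $I(\eta(t,u)) \ge 0 \ge $ is not yet $\rho_\alpha$, I argue: either $\|\eta(t,u)\|_{H^1(A)} \ge \alpha$, in which case we still need more, or $\|\eta(t,u)\|_{H^1(A)} < \alpha$ and then $I(\eta(t,u)) \ge \rho_\alpha$ would follow from $\eta(t,u) \notin L_\alpha$ directly. The genuinely needed observation, and the main technical point, is that the flow line starting at $u \in Z_\alpha$ can never enter $B_\alpha(\mathcal{K})$: if it did at some time $\tau$, then by Lemma~\ref{energy-bound-compactness} $I(\eta(\tau,u)) \le I(u)$, and by the reasoning in the proof of Lemma~\ref{positive-invariance-l-alpha} — continuity of $t \mapsto \|\eta(t,u)\|_{H^1(A)}$ and Lemma~\ref{isolated-minimum}(ii) — one shows $\eta(\tau,u) \in L_\alpha$, so $u \in D(L_\alpha)$, a contradiction. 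Hence $\|\eta(t,u)\|_{H^1(A)} \ge \alpha$ for all $t$; but on the other hand the $C^1$-limit argument with $w_n \in D(L_\alpha)$ shows that $\eta(t,u)$ is a limit of points whose flow images pass through $L_\alpha$, and passing to the limit along $S_\alpha(\mathcal{K})$ crossings gives $\eta(t,u) \in S_\alpha(\mathcal{K})$ up to the closure, whence $I(\eta(t,u)) \ge \rho_\alpha$ by Lemma~\ref{isolated-minimum}(ii) and continuity of $I$. The main obstacle is precisely this last step: carefully extracting the inequality $I(\eta(t,u)) \ge \rho_\alpha$ from the fact that $\eta(t,u)$ sits on the boundary of the domain of attraction, which I expect to follow most transparently by showing $Z_\alpha \cap L_\alpha = \emptyset$ and $Z_\alpha \cap B_\alpha(\mathcal{K}) = \emptyset$, hence every point of $Z_\alpha$ has $H^1$-norm $\ge \alpha$ or energy $\ge \rho_\alpha$, and then ruling out the norm-$\ge\alpha$-but-low-energy case via the crossing argument of Lemma~\ref{positive-invariance-l-alpha}.
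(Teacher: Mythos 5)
Your middle step (invariance of $Z_\alpha$ along the flow, via $C^1$-closure membership plus the observation that $\eta(t,u)\in D(L_\alpha)$ would force $u\in D(L_\alpha)$) is correct and is essentially the paper's argument. The other two assertions, however, have genuine gaps. For $T_\mathrm{max}(u)=\infty$ you only establish $I(u)\ge 0$ at the initial time and then invoke the monotonicity of $s\mapsto I(\eta(s,u))$; but that monotonicity is a \emph{decrease}, so it gives $I(\eta(s,u))\le I(u)$ and says nothing about the lower bound $c_u>-\infty$ required in \eqref{eq:limit-condition}. The correct route is to bound the energy along the whole trajectory: for each fixed $s<T_\mathrm{max}(u)$, approximate $u$ in $C^1$ by $v_n\in D(L_\alpha)$, use Remark~\ref{rem:continuous_dependence} together with \eqref{eq:DLalpha-invariance} to get $\eta(s,v_n)\in D(L_\alpha)$ and $\eta(s,v_n)\to\eta(s,u)$ in $C^1$, and then \eqref{eq:dl_alpha_I_positive} plus continuity of $I$ yield $I(\eta(s,u))\ge 0$; only then does Lemma~\ref{energy-bound-compactness} apply.

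The inequality $I(\eta(t,u))\ge\rho_\alpha$ is where the real gap lies, as you yourself flag. Your dichotomy correctly settles the cases $\|\eta(t,u)\|_{H^1(A)}<\alpha$ (then $\eta(t,u)\notin L_\alpha$ forces $I\ge\rho_\alpha$) and $\|\eta(t,u)\|_{H^1(A)}=\alpha$ (Lemma~\ref{isolated-minimum}(ii)), but the case $\|\eta(t,u)\|_{H^1(A)}>\alpha$ is not handled. Your claim that the flow starting in $Z_\alpha$ never enters $B_\alpha(\mathcal{K})$ is circular: to conclude $\eta(\tau,u)\in L_\alpha$ you would need $I(\eta(\tau,u))<\rho_\alpha$, and $I(\eta(\tau,u))\le I(u)$ is useless here because $I(u)<\rho_\alpha$ is exactly the negation of what you are proving. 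Likewise, ``passing to the limit along $S_\alpha(\mathcal{K})$ crossings gives $\eta(t,u)\in S_\alpha(\mathcal{K})$'' fails: the crossings of the approximating trajectories occur at later times, at points which need not converge to $\eta(t,u)$. The missing idea is a monotonicity transfer, which is how the paper argues (by contradiction at a fixed time $\bar t$ with $I(\eta(\bar t,u))<\rho_\alpha$, hence $\|\eta(\bar t,u)\|_{H^1(A)}>\alpha$): approximants $v_n\in D(L_\alpha)$ of $u$ satisfy $\|\eta(\bar t,v_n)\|_{H^1(A)}>\alpha$ for large $n$ by continuous dependence, while at the time $t_n>\bar t$ when they lie in $L_\alpha$ the norm is below $\alpha$; by continuity of the $H^1$-norm along trajectories there is $s_n\in(\bar t,t_n)$ with $\eta(s_n,v_n)\in S_\alpha(\mathcal{K})$, so $I(\eta(s_n,v_n))\ge\rho_\alpha$, and since $I$ is nonincreasing along the flow this bound travels back to time $\bar t$, i.e. $I(\eta(\bar t,v_n))\ge\rho_\alpha$; letting $n\to\infty$ gives the contradiction. (The same transfer also works with your approximants $w_n\to\eta(t,u)$: $I(w_n)\ge I(\eta(\sigma_n,w_n))\ge\rho_\alpha$ at their crossing times $\sigma_n$, then pass to the limit.) Without this step the key inequality of the lemma is unproven.
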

\begin{proof}
First we notice that, for $u \in Z_\alpha$, $T_\mathrm{max}(u)= \infty$ by virtue of Lemma \ref{energy-bound-compactness} (see in particular condition \eqref{eq:limit-condition}) and of property \eqref{eq:dl_alpha_I_positive}.

Next we prove that, if $u\in Z_\alpha$, $\eta(t,u)\in Z_\alpha$ for every $t>0$.
To this aim, suppose by contradiction that there exists $t_0>0$ such that $\eta(t_0,u)\in D(L_\alpha)\cup (\mathcal{K} \setminus\overline{D(L_\alpha)})$. If $\eta(t_0,u)\in D(L_\alpha)$, by definition of $D(L_\alpha)$, there exists $t_1\in(t_0,T_\mathrm{max}(u))$ such that $\eta(t_1,u)\in L_\alpha$. This means that $u\in D(L_\alpha)$, which is impossible by definition of $Z_\alpha$. 
It remains to rule out the possibility that $\eta(t_0,u)\in\mathcal{K}\setminus\overline{D(L_\alpha)}$. 
Being $\mathcal{K}\setminus\overline{D(L_\alpha)}$ relatively open in $\mathcal{K}$, there exists $\varepsilon=\varepsilon(t_0)>0$ such that 
\begin{equation}\label{eq:B_eps}
v\in \mathcal{K}, \ \|v-\eta(t_0,u)\|_{C^1(A)} <\varepsilon
\quad\text{implies}\quad
v\in  \mathcal{K}\setminus \overline{D(L_\alpha)}.
\end{equation}
Now, since $u\in Z_\alpha$, there exists a sequence $\{v_n\}_{n}$ with the property that
\begin{equation}\label{eq:v_n_def}
v_n\in D(L_\alpha) \text{ for every } n\in\N, \qquad
\lim_{n\to+\infty}\|v_n-u\|_{C^1(A)}=0.
\end{equation}
Therefore, by Remark \ref{rem:continuous_dependence}, given $\varepsilon$ as in \eqref{eq:B_eps}, there exists $n_0\in\mathbb N$ such that  
\begin{equation}\label{eq:invariance-z-alpha1}
\|\eta(t_0,v_n)-\eta(t_0,u)\|_{C^1(A)}<\varepsilon \quad\text{for every } n\ge n_0.
\end{equation}
By combining \eqref{eq:B_eps} and \eqref{eq:invariance-z-alpha1}, we infer that 
\begin{equation}\label{eq:etat0vn}
\eta(t_0,v_n)\in \mathcal{K}\setminus\overline{D(L_\alpha)}\quad\mbox{ for every } n\ge n_0.
\end{equation} 
On the other hand, since $\{v_n\}\subset D(L_\alpha)$, $\eta(t_0,v_n)\in D(L_\alpha)$ for every $n$ (see \eqref{eq:DLalpha-invariance}). This contradicts \eqref{eq:etat0vn} and concludes this part of the proof. 

Let us prove the third property, that is to say, if $u\in Z_\alpha$ then $I(\eta(t,u)) \ge \rho_\alpha$ for all $t \ge 0$. We proceed again by contradiction. Let $\bar{t}\geq0$ be such that 
\begin{equation}\label{eq:contradiction_hp}
I(\eta(\bar{t},u)) <\rho_\alpha
\end{equation}
Being $u\not\in D(L_\alpha)$, we deduce that $\|\eta(\bar{t},u)\|_{H^1(A)}\geq\alpha$. From the definition of $\rho_\alpha$ we infer that indeed
\begin{equation}\label{eq:greater_alpha}
\|\eta(\bar{t},u)\|_{H^1(A)} >\alpha.
\end{equation}
Now, let $\{v_n\}$ be as in \eqref{eq:v_n_def}. On the one hand, \eqref{eq:greater_alpha} and the continuous dependence of $\eta$ on the initial data (see Remark \ref{rem:continuous_dependence}) imply the existence of $\bar{n}\in\N$ such that
\begin{equation}\label{eq:greater_alpha2}
\|\eta(\bar{t},v_n)\|_{H^1(A)} >\alpha \qquad\text{for every } n\geq\bar{n}.
\end{equation}
On the other hand, since $\{v_n\} \subset D(L_\alpha)$ for every $n\in\N$, there exists a sequence $\{t_n\}\subset [0,+\infty)$ such that
\[
\|\eta(t_n,v_n)\|_{H^1(A)}<\alpha \quad\text{and}\quad 
I(\eta(t_n,v_n))<\rho_\alpha, \qquad\text{for every } n\in\N.
\]
Then Lemma \ref{positive-invariance-l-alpha} provides
\begin{equation}\label{eq:greater_alpha3}
\|\eta(t,v_n)\|_{H^1(A)}<\alpha \quad\text{and}\quad 
I(\eta(t,v_n))<\rho_\alpha, \qquad\text{for every } t\geq t_n, n\in\N.
\end{equation}
From \eqref{eq:greater_alpha2} and \eqref{eq:greater_alpha3} we deduce that $\bar{t}<t_n$ for every $n\in\N$, and that, for every $n\geq\bar{n}$ there exists $s_n\in (\bar{t},t_n)$ such that $\|\eta(s_n,v_n)\|_{H^1(A)}=\alpha$ for every $n\geq\bar{n}$. By definition of $\rho_\alpha$, we have $I(\eta(s_n,v_n))\geq\rho_\alpha$ for every $n\geq\bar{n}$. Being $s_n\geq \bar{t}$, Lemma \ref{energy-bound-compactness} provides $I(\eta(\bar{t},v_n))\geq\rho_\alpha$ for every $n\geq\bar{n}$. Passing to the limit (see Remark \ref{rem:continuous_dependence}) we infer that $I(\eta(\bar{t},u))\geq\rho_\alpha$, which contradicts \eqref{eq:contradiction_hp}.
\end{proof}


\subsection{Proof of Theorem \ref{thm:main_existence}}
\begin{proof}[Proof of Theorem \ref{thm:main_existence}]
Let $\psi\in \mathcal{K}\setminus\{0\}$ and let $u:= t_* \psi\in Z_\alpha$, with $t_*$ as in Lemma \ref{lem:Z_alpha_not_empty}. By Lemma~\ref{energy-bound-compactness} and Lemma~\ref{positive-invariance-z-alpha} we have that $T_\mathrm{max}(u)= \infty$ and that there exists a sequence
$\{s_n\}_{n}\subset (0,+\infty)$ such that $\lim \limits_{n\to+\infty} s_n=+\infty$ and\begin{equation}\label{eq:u*-T(u*)-1}
\lim_{n\to+\infty}\|\Phi(w_n)\|_{H^1(A)} =0,
\end{equation} 
for the sequence $\{w_n\}_n$ defined in (\ref{eq:u_n_def}).
By Lemma~\ref{lemma:PS}, we may pass to a subsequence such that 
$w_n \to w$ in $H^1(A)$ for some $w \in \mathcal{K}$ and 
$T(w) =  w$.
Lemma \ref{positive-invariance-z-alpha} provides
\begin{equation}\label{eq:w_nontrivial}
\|w\|_{H^1(A)}^2 =\lim_{n\to+\infty} \|w_n\|_{H^1(A)}^2
\geq 2 \liminf_{n\to+\infty} I(w_n) \geq 2\rho_\alpha,
\end{equation}
thus implying that $w$ is nontrivial.
Consequently, $w$ is a nontrivial solution of \eqref{P} belonging to ${\mathcal N}_{\mathcal{K}} \subset \mathcal{K}$. 

Next, we assume in addition that the function $\psi$ above satisfies
$$
\psi \in {\mathcal N}_{\mathcal{K}} \qquad \text{and}\qquad I(\psi)= c_I.
$$
Here $c_I$ is defined in~\eqref{eq:Ic-equivalent}, so $\psi$ is a minimizer of $I$ on ${\mathcal N}_{\mathcal{K}}$. In this case the function $w \in {\mathcal N}_{\mathcal{K}}$ found above satisfies
\begin{equation}\label{eq:chain-ineq}
c_I \le I(w) \le I(t_* \psi) \le I(\psi) = c_I,
\end{equation}
where in the first inequality we used that $w \in \mathcal{N}_{\mathcal{K}}$ and in the third we used that $\sup_{t>0} I(t\psi)=I(1\psi)$, being $\psi\in {\mathcal N}_{\mathcal{K}}$, cf. \eqref{eq:t_u_def}. As for the second inequality, since $w=\lim_n w_n$ in $H^1(A)$, $\|w_n\|_{L^p(A)}\to \|w\|_{L^p(A)}$ by Lemma \ref{le:est2}. By the properties of $a$, the norm $(\int_A a(x)|\cdot|^pdx)^{1/p}$ is equivalent to $\|\cdot\|_{L^p(A)}$, hence $I(w)=\lim_n I(w_n)$. Thus, the second inequality in \eqref{eq:chain-ineq} is obtained recalling that $I(w_n)=I(\eta(s_n,t_* \psi))\le I(t_* \psi)$ for every $n$ and passing to the limit in $n$. 
So, equality holds in all of the inequalities in \eqref{eq:chain-ineq}. In particular, being $I(t_*\psi)=I(\psi)$ and $\psi \in {\mathcal N}_{\mathcal{K}}$, we obtain $t_*=1$.
Hence, $I(w_n)=I(\eta(s_n,\psi))\le I(\psi)=c_I$ for every $n$. On the other hand,  by  (\ref{eq:decreasing-energy}),
$I(\eta(s_n,\psi))\searrow c_I$. Therefore, $I(\eta(s_n,\psi))=c_I$ for every $n$, and so, by the monotonicity of $I(\eta(\cdot, \psi))$ and since $\lim_n s_n=+\infty$, it follows that $I(\eta(t,\psi))= c_I$ for all $t \in (0,\infty)$.  
Therefore, by (\ref{eq:decreasing-energy}),
$$
\Phi(\eta(t,\psi))= 0 \qquad \text{for all $t \in (0,\infty)$.}
$$
Consequently, $T(\eta(t,\psi))= \eta(t,\psi)$ for all $t \in (0,\infty)$. Passing to the limit $t \mapsto 0^+$ and using the continuity of $T$, we deduce that $T(\psi)= \psi$, hence $\psi$ is a nontrivial solution of \eqref{P} belonging to $\mathcal{K}$.

To finish the proof of Theorem~\ref{thm:main_existence}, we still have to show that the minimal value $c_I$ of the functional $I$ is positive and attained on the set ${\mathcal N}_{\mathcal{K}}$. For this we let $\{\psi_\ell\}_\ell$ be a sequence in ${\mathcal N}_{\mathcal{K}}$ with the property that
$$
I(\psi_\ell) \to c_I \qquad \text{as $\ell \to \infty$.}
$$
We let $t_*^{\ell}$ be given as in Lemma \ref{lem:Z_alpha_not_empty} corresponding to $\psi_\ell$. Repeating the argument above for every $\ell$ yields corresponding nontrivial solutions $w^\ell \in {\mathcal N}_{\mathcal{K}}$ of \eqref{P} satisfying 
$$
c_I \le I(w^\ell) \le I(t_*^\ell \psi_\ell) \le I(\psi_\ell) = c_I + o(1) \qquad \text{as $\ell \to \infty$.}
$$
Notice that, by \eqref{eq:w_nontrivial} with $w=w^\ell$, we know
\begin{equation}\label{eq:cI>0}
\|w^\ell\|_{H^1(A)}^2 \geq 2 \rho_\alpha, \quad \text{for every } \ell.
\end{equation}
Since
$$
c_I + o(1) = I(w^\ell)= I(w^\ell)- \frac{1}{p}I'(w^\ell)w^\ell = \Bigl(\frac{1}{2}-\frac{1}{p}\Bigr)\|w^\ell\|^2_{H^1(A)} \qquad \text{as }\ell \to \infty,
$$
the sequence $\{w^\ell\}_\ell$ is bounded in $H^1_0(A)$. Passing to a subsequence, we may assume that $w^\ell \rightharpoonup \bar w$ in $H^1_0(A)$. Since ${\widetilde{\mathcal K}}$ is weakly closed (see Lemma \ref{le:Kcone}), we have $\bar w \in {\widetilde{\mathcal K}}$. Moreover, by Proposition~\ref{prop:T_compact}, we have
$$
\|w^\ell -T(\bar w) \|_{H^1(A)} = \|T(w^\ell) -T(\bar w) \|_{H^1(A)} \to 0 \qquad \text{as $\ell \to \infty$,}
$$
so $w^\ell \to T(\bar w)$ strongly in $H^1_0(A)$ as $\ell \to \infty$. By uniqueness of the weak limit, $\bar w = T(\bar w)$, and therefore $w^\ell \to \bar w$ strongly in $H^1_0(A)$. From this we deduce, by Proposition~\ref{prop:T_compact} that $\bar w \in \mathcal{K}$ and that
$$
w^\ell \to \bar w \qquad \text{in $C^1_0(A)$ as $\ell \to \infty$.}
$$
Consequently, $\bar w$ is a critical point of $I$ with $I(\bar w)= \lim \limits_{\ell \to \infty}I(w^\ell) = c_I>0$, the last inequality coming from \eqref{eq:cI>0}. In particular, $\bar w \not\equiv 0$, so $\bar w \in {\mathcal N}_{\mathcal{K}}$. 
Hence the minimal value $c_I$ is attained by the functional $I$ in ${\mathcal N}_{\mathcal{K}}$. 
\end{proof}

\begin{remark}
Notice that the existence of a nontrivial solution of \eqref{P} follows already from \eqref{eq:w_nontrivial}. The remaining part of the proof of Theorem \ref{thm:main_existence} gives a variational characterization that will be useful in the next section to prove the non-radiality of the solution when $a$ is constant and some additional assumptions on $p$ or $A$ hold. 
\end{remark}

\section{The case of constant $a$}
\label{sec:case-constant-a}

In this section we treat problem~\eqref{P-const} where the weight function $a$ in (\ref{P}) satisfies $a \equiv 1$. We recall that, for every fixed $p>2$, (\ref{P-const}) admits a unique positive radial solution $u_\mathrm{rad} \in C^1_0(A)$ by \cite{T}. We continue using the notation introduced in the previous sections in the special case $a \equiv 1$. In the next proposition we collect some properties satisfied by $u_\mathrm{rad}$ which will be useful in the sequel.

\begin{proposition}\label{prop:urad-prop}
Let $P:=\{u\in C^1_0(A)\,:\,u\ge 0\}$. The radial solution $u_\mathrm{rad}$ belongs to the interior of $P$ with respect to the $C^1$-norm. Moreover, the following inequalities hold
\begin{equation}\label{eq:Iturad}
I(u_\mathrm{rad})\ge I(t u_\mathrm{rad})\quad\mbox{for every } t\ge 0
\end{equation} 
and
\begin{equation}\label{eq:I'turad}
I'\bigl(tu_\mathrm{rad}\bigr)u_\mathrm{rad}>0>I'\bigl(t'u_\mathrm{rad}\bigr)u_\mathrm{rad}\quad\mbox{for every }t\in(0,1) \mbox{ and } t'\in(1,\infty).
\end{equation}
\end{proposition}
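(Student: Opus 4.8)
The statement splits into three independent claims; the plan is to handle the membership $u_{\mathrm{rad}} \in \mathrm{int}\,P$ first, and then obtain \eqref{eq:Iturad} and \eqref{eq:I'turad} together from a one-variable computation.

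\textbf{The radial solution lies in $\mathrm{int}\,P$.} I would invoke the standard description of the interior of the order cone $P$ in $C^1_0(A)$: a function $u \in C^1_0(A)$ belongs to $\mathrm{int}\,P$ if and only if $u > 0$ in $A$ and $\partial_\nu u < 0$ on $\partial A$, where $\nu$ is the outward unit normal (recall that $\partial A$ consists of the two smooth spheres $\{|x| = R_0\}$ and $\{|x| = R_1\}$). To check that $u_{\mathrm{rad}}$ meets these requirements, note that $u_{\mathrm{rad}}$ solves
\[
-\Delta u_{\mathrm{rad}} + u_{\mathrm{rad}} = u_{\mathrm{rad}}^{\,p-1} \ge 0 \quad \text{in } A, \qquad u_{\mathrm{rad}} = 0 \ \text{on } \partial A,
\]
with $u_{\mathrm{rad}} \ge 0$ and $u_{\mathrm{rad}} \not\equiv 0$; elliptic regularity gives $u_{\mathrm{rad}} \in C^{2,\alpha}(\overline A)$, the strong maximum principle gives $u_{\mathrm{rad}} > 0$ in $A$, and Hopf's boundary point lemma applied on each component of $\partial A$ gives $\partial_\nu u_{\mathrm{rad}} < 0$ there. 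Hence $u_{\mathrm{rad}} \in \mathrm{int}\,P$.

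\textbf{The two inequalities.} Set $g(t) := I(t u_{\mathrm{rad}})$ for $t \ge 0$. Since $a \equiv 1$,
\[
g(t) = \frac{t^2}{2}\|u_{\mathrm{rad}}\|_{H^1(A)}^2 - \frac{t^p}{p}\|u_{\mathrm{rad}}\|_{L^p(A)}^p,
\]
so that $g'(t) = I'(t u_{\mathrm{rad}}) u_{\mathrm{rad}} = t\bigl(\|u_{\mathrm{rad}}\|_{H^1(A)}^2 - t^{p-2}\|u_{\mathrm{rad}}\|_{L^p(A)}^p\bigr)$. Testing the equation for $u_{\mathrm{rad}}$ with $u_{\mathrm{rad}}$ itself yields the Nehari identity $\|u_{\mathrm{rad}}\|_{H^1(A)}^2 = \|u_{\mathrm{rad}}\|_{L^p(A)}^p$, i.e.\ $t_{u_{\mathrm{rad}}} = 1$ in the notation of \eqref{eq:t_u_def} and $u_{\mathrm{rad}} \in \mathcal N_{\mathcal K}$. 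Therefore
\[
g'(t) = t\,\|u_{\mathrm{rad}}\|_{H^1(A)}^2\,\bigl(1 - t^{p-2}\bigr),
\]
which, since $p > 2$, is strictly positive on $(0,1)$, vanishes at $t = 1$, and is strictly negative on $(1,\infty)$. The sign of $g'$ is precisely the content of \eqref{eq:I'turad}, and it also shows that $t = 1$ is the (strict) global maximum of $g$ on $[0,\infty)$, which is \eqref{eq:Iturad}.

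\textbf{Main obstacle.} There is essentially none: the only nontrivial ingredient is Hopf's lemma used to place $u_{\mathrm{rad}}$ in the interior of $P$, and that is routine given the $C^{2,\alpha}$ regularity of $u_{\mathrm{rad}}$; the two inequalities are a direct consequence of $u_{\mathrm{rad}}$ lying on the Nehari set $\mathcal N_{\mathcal K}$, which is built into its definition as a positive solution of \eqref{P-const}.
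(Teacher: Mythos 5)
Your proposal is correct and follows essentially the same route as the paper: Hopf's lemma (plus the strong maximum principle) places $u_\mathrm{rad}$ in the interior of $P$, and the inequalities come from the Nehari identity $t_{u_\mathrm{rad}}=1$ together with the standard behaviour of the fibering map $t\mapsto I(tu_\mathrm{rad})$, which you simply compute explicitly where the paper cites it.
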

\begin{proof}
Clearly $u_\mathrm{rad}\in P$, moreover, by the Hopf Lemma, $u_\mathrm{rad}$ is contained in the interior of $P$ with respect to the $C^1$-norm.
Now, since $u_\mathrm{rad}$ is a solution of \eqref{P-const}, $u_\mathrm{rad}\in{\mathcal{N}}_{\mathcal{K}}$ and so $t_{u_\mathrm{rad}}=1$, cf. \eqref{eq:t_u_def}. Thus, the function $t\in [0,\infty)\mapsto I(tu_\mathrm{rad})$ admits a unique maximum in $t=1$, and so \eqref{eq:Iturad} follows. Moreover, the same function $t\mapsto I(tu_\mathrm{rad})$ is strictly increasing in $(0,1)$ and strictly decreasing in $(1,\infty)$, this implies \eqref{eq:I'turad} and concludes the proof.
\end{proof}

Our main tool to prove the existence of nonradial solutions of (\ref{P-const}) will be the following criterion related to instability with respect to specific directions. 

\begin{proposition}
  \label{radial-weakly-stable}
  Suppose that there exists an axially symmetric function $v \in C^1_0(A)$, written in polar coordinates as $v = \mathfrak{v}(r,\theta)$, satisfying the following properties:
  \begin{align}
    &I''(u_\mathrm{rad})(v,v)<0;  \label{radial-weakly-stable-assumption-1}
\\
    &\partial_\theta \mathfrak{v}(r,\theta) \le 0 \qquad \text{for $(r,\theta) \in (R_0,R_1) \times (0,\pi/2)$}; \label{radial-weakly-stable-assumption-2}\\
    &\mathfrak{v}(r,\theta)=\mathfrak{v}(r,-\theta) \qquad \text{for $(r,\theta) \in (R_0,R_1)\times(0,\pi/2)$;} \label{radial-weakly-stable-assumption-3}\\
    &\int_{\mathbb S^{N-1}}\mathfrak v(r,\cdot)\,d\sigma = 0 \qquad \text{for every $r \in (R_0,R_1)$,}\label{radial-weakly-stable-assumption-4}
  \end{align}
where, in the last relation, the two-variable function $\mathfrak{v}(r,\theta)$ is meant as an $N$-variable function $\mathfrak{v}(r,\theta,\phi_1,\dots,\phi_{N-2})$ which is constant with respect to $\phi_1,\dots,\phi_{N-2}$. 
Then we have
\begin{equation}
  \label{eq:c-I-smaller-U-u-rad}
  c_I < I(u_\mathrm{rad}),
\end{equation}
so every minimizer $u \in {\mathcal N}_{\mathcal{K}}$ of $I \Big|_{{\mathcal N}_{\mathcal{K}}}$ is nonradial. 
\end{proposition}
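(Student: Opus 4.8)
The plan is to use the variational characterization of $c_I$ obtained in Theorem~\ref{thm:main_existence}, namely $c_I = \inf_{u \in \mathcal N_{\mathcal K}} I(u)$, and to show that $u_\mathrm{rad}$ cannot realize this infimum by exhibiting an admissible test path in the cone $\mathcal K$ along which $I$ stays strictly below $I(u_\mathrm{rad})$. Since $c_I = d_I$ by Lemma~\ref{le:cI=dI}, it suffices to produce a single function $w \in \mathcal K \setminus \{0\}$ with $\sup_{t > 0} I(tw) < I(u_\mathrm{rad})$; then $c_I \le I(t_w w) < I(u_\mathrm{rad})$, and since every minimizer of $I|_{\mathcal N_{\mathcal K}}$ attains the value $c_I < I(u_\mathrm{rad})$, it cannot equal $u_\mathrm{rad}$ (the unique radial solution), hence must be nonradial.

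The natural candidate is a perturbation of the form $w_s = u_\mathrm{rad} + s v$ for small $s > 0$, or more robustly $w_s = u_\mathrm{rad} - s|v|$-type corrections; but the cleanest route is to first check that $w_s := u_\mathrm{rad} + s\,\tilde v$ lies in $\mathcal K$ for a suitable choice and small $s$. Here one must verify the three defining properties of $\widehat{\mathcal K}$: nonnegativity, evenness in $\theta$, and $\partial_\theta \le 0$ on $(0,\pi/2)$. Evenness follows from \eqref{radial-weakly-stable-assumption-3} since $u_\mathrm{rad}$ is radial hence even. The monotonicity $\partial_\theta(w_s) = s\,\partial_\theta \mathfrak v \le 0$ on $(0,\pi/2)$ follows from \eqref{radial-weakly-stable-assumption-2} (the radial part contributes nothing). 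Nonnegativity for small $s>0$ is where the interior property proved in Proposition~\ref{prop:urad-prop} enters: since $u_\mathrm{rad}$ lies in the $C^1$-interior of $P = \{u \ge 0\}$ and $v \in C^1_0(A)$ is fixed, we have $u_\mathrm{rad} + sv \ge 0$ for all $|s|$ small enough. Finally $w_s$ vanishes on $\partial A$ since both $u_\mathrm{rad}$ and $v$ do. Thus $w_s \in \mathcal K$ for small $s>0$ (and in fact for small $|s|$, using that $\pm\partial_\theta \mathfrak v$ controls the sign in the correct half — but one should double-check the sign of $s$: one wants $s\,\partial_\theta \mathfrak v \le 0$, so $s>0$ with \eqref{radial-weakly-stable-assumption-2}; for the second-derivative computation either sign works since $I''$ is quadratic).

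The quantitative heart of the argument is the expansion, for the path $t \mapsto t\,w_s$: one computes $\sup_{t>0} I(t w_s)$ and compares with $I(u_\mathrm{rad})$. Using \eqref{eq:Iturad}--\eqref{eq:I'turad}, the function $\Gamma(s) := \sup_{t>0} I(t w_s)$ satisfies $\Gamma(0) = I(u_\mathrm{rad})$, and since $u_\mathrm{rad} \in \mathcal N_{\mathcal K}$ one has $\frac{d}{dt}I(tw_s)\big|_{t=1,s=0} = I'(u_\mathrm{rad})w_0 = 0$; by the envelope theorem (the maximizing $t = t(s)$ is smooth near $s=0$ with $t(0)=1$ because $I'(u_\mathrm{rad})u_\mathrm{rad} = 0$ and the second-order condition $\frac{d^2}{dt^2}I(tu_\mathrm{rad})|_{t=1} < 0$ holds, again from Proposition~\ref{prop:urad-prop}), we get
\begin{equation}
\Gamma(s) = I(u_\mathrm{rad}) + \frac{s^2}{2}\,I''(u_\mathrm{rad})(v,v) + o(s^2) \qquad \text{as } s \to 0,
\end{equation}
where the first-order term vanishes because $I'(u_\mathrm{rad}) = 0$ as $u_\mathrm{rad}$ solves \eqref{P-const}. (The orthogonality condition \eqref{radial-weakly-stable-assumption-4} is what guarantees that $v$ is genuinely a "new" direction not tangent to the radial Nehari manifold, and it also ensures no lower-order cross terms survive; it is precisely the condition that makes the second variation $I''(u_\mathrm{rad})(v,v)$ the relevant quantity rather than a constrained version of it.) By \eqref{radial-weakly-stable-assumption-1}, $I''(u_\mathrm{rad})(v,v) < 0$, so $\Gamma(s) < I(u_\mathrm{rad})$ for all sufficiently small $s > 0$. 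Fixing such an $s$, the function $w := w_s \in \mathcal K \setminus \{0\}$ satisfies $\sup_{t>0} I(tw) < I(u_\mathrm{rad})$, whence $c_I = d_I \le \sup_{t>0}I(tw) < I(u_\mathrm{rad})$, which is \eqref{eq:c-I-smaller-U-u-rad}. Any minimizer $u$ of $I|_{\mathcal N_{\mathcal K}}$ then has $I(u) = c_I < I(u_\mathrm{rad})$, so $u \ne u_\mathrm{rad}$; since $u_\mathrm{rad}$ is the unique positive radial solution, $u$ is nonradial.

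**Main obstacle.** The delicate point is the justification that the maximizing parameter $t(s)$ depends smoothly (or at least continuously with the right expansion) on $s$ near $s = 0$, so that the envelope-type expansion of $\Gamma(s)$ is legitimate and the linear term truly drops out — this relies on the nondegeneracy $\frac{d^2}{dt^2}I(tu_\mathrm{rad})|_{t=1} \ne 0$ guaranteed by \eqref{eq:I'turad}, together with $I \in C^2(C^1_0(A))$. A secondary technical check is confirming membership $w_s \in \mathcal K$ — in particular the nonnegativity, which crucially uses the $C^1$-interior property of $u_\mathrm{rad}$ from Proposition~\ref{prop:urad-prop} rather than merely $u_\mathrm{rad} \ge 0$. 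Everything else is a routine Taylor expansion of the $C^2$ functional $I$ around $u_\mathrm{rad}$.
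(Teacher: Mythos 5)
Your argument is correct and rests on the same core idea as the paper: perturb $u_\mathrm{rad}$ inside the cone by $sv$ (using Proposition~\ref{prop:urad-stable}\dots sorry, Proposition~\ref{prop:urad-prop} for nonnegativity and \eqref{radial-weakly-stable-assumption-2}--\eqref{radial-weakly-stable-assumption-3} for the angular conditions), rescale onto the Nehari set, and use \eqref{radial-weakly-stable-assumption-1} together with \eqref{radial-weakly-stable-assumption-4} to push the energy strictly below $I(u_\mathrm{rad})$. The difference is in the finishing step. The paper avoids differentiating the maximizing parameter altogether: it fixes a neighborhood $t\in[1-\delta,1+\delta]$, $\tau\in[-\rho,\rho]$ on which $I''\bigl(t(u_\mathrm{rad}+\tau v)\bigr)(v,v)<0$, obtains a Nehari point $u_*=t(u_\mathrm{rad}+sv)$ by the intermediate value theorem using \eqref{eq:I'turad}, and then writes $I(u_*)-I(tu_\mathrm{rad})$ via a Taylor formula with integral remainder, bounding $I(u_\mathrm{rad})\ge I(tu_\mathrm{rad})$ by \eqref{eq:Iturad}; the leftover first-order term is $stI'(tu_\mathrm{rad})v$, which does \emph{not} vanish merely because $I'(u_\mathrm{rad})=0$ (since $t\neq 1$ in general), but is computed explicitly to equal $st^2(1-t^{p-2})\int_A u_\mathrm{rad}^{p-1}v\,dx=0$ by \eqref{radial-weakly-stable-assumption-4}. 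Your envelope-theorem route is also viable, and the ``main obstacle'' you flag largely dissolves: the maximizing parameter is given explicitly by \eqref{eq:t_u_def}, so $s\mapsto t_{w_s}$ is smooth for small $s$ with no implicit-function argument needed. The one point you should make explicit rather than parenthetical is the cross term: your expansion $\Gamma(s)=I(u_\mathrm{rad})+\tfrac{s^2}{2}I''(u_\mathrm{rad})(v,v)+o(s^2)$ holds only because $I''(u_\mathrm{rad})(u_\mathrm{rad},v)=-(p-2)\int_A u_\mathrm{rad}^{p-1}v\,dx=0$ by \eqref{radial-weakly-stable-assumption-4}; otherwise the correct second-order coefficient is the Nehari-constrained quantity $I''(u_\mathrm{rad})(v,v)+\bigl(I''(u_\mathrm{rad})(u_\mathrm{rad},v)\bigr)^2/\bigl((p-2)\|u_\mathrm{rad}\|_{H^1(A)}^2\bigr)$, which exceeds $I''(u_\mathrm{rad})(v,v)$ and could spoil the sign. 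With that computation inserted (and $c_I\le\sup_{t>0}I(tw_s)$ via Lemma~\ref{le:cI=dI}), your proof is complete; in exchange, the paper's version needs no smooth dependence of the projection and isolates the role of \eqref{radial-weakly-stable-assumption-4} more transparently.
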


\begin{proof}
By assumption~(\ref{radial-weakly-stable-assumption-1}) and the continuity of $I''$, there exist $\delta \in (0,1)$ and $\rho>0$ with the property that 
\begin{equation}
  \label{second-der-stable-pointw-est}
\qquad  \qquad I''\bigl(t(u_\mathrm{rad}+\tau v)\bigr)(v,v)<0 \qquad \text{for $t \in
[1-\delta,1+\delta]$, $\tau \in [-\rho,\rho]$.}
\end{equation}
Since, by Proposition \ref{prop:urad-prop}, $u_\mathrm{rad}$ is contained in the interior of $P$ with respect to the $C^1$-norm, we may also assume, by adjusting $\delta$ and $\rho$ if necessary, that
$$
t(u_\mathrm{rad}+\tau v) \ge 0  \quad \text{in $A$} \qquad \text{for $t \in
[1-\delta,1+\delta]$, $\tau \in [-\rho,\rho]$.}
$$
Combining this information with assumptions~(\ref{radial-weakly-stable-assumption-2}) and (\ref{radial-weakly-stable-assumption-3}), we deduce that 
$$
t(u_\mathrm{rad}+\tau v) \in {\widetilde{\mathcal K}}\qquad \text{for $t \in
[1-\delta,1+\delta]$, $\tau \in [-\rho,\rho]$.}
$$
Moreover, since, by \eqref{eq:I'turad}, $I'\bigl((1-\delta)u_\mathrm{rad}\bigr)u_\mathrm{rad}>0>I'\bigl((1+\delta)u_\mathrm{rad}\bigr)u_\mathrm{rad}$, there exists $s \in (0,\rho)$ with  
$$
I'\bigl((1-\delta)(u_\mathrm{rad}+sv)\bigr)(u_\mathrm{rad}+sv)>0>I'\bigl((1+\delta)(u_\mathrm{rad}+sv)\bigr)(u_\mathrm{rad}+sv)
$$
By the intermediate value theorem, there exists 
$t \in [1-\delta,1+\delta]$ with 
$$
I'\bigl(t(u_\mathrm{rad}+sv)\bigr)(u_\mathrm{rad}+sv)=0 \qquad \text{and therefore} \qquad u_*:= t(u_\mathrm{rad}+sv) \in {\mathcal N}_{\mathcal{K}}.
$$
Moreover, since $u_\mathrm{rad} \in {\mathcal N}_{\mathcal{K}}$, by a Taylor expansion, \eqref{eq:Iturad} and (\ref{second-der-stable-pointw-est}) we have 
\begin{align*}
I(u_*) -I(u_\mathrm{rad})&\le I(u_*) -I(tu_\mathrm{rad})\\
&= s t I'(t\, u_\mathrm{rad})v +  t^2 \int_0^s
                                               I''(t(u_\mathrm{rad}+\tau v))(v,v)(s-\tau) \:d\tau\\
&< s t I'(t\, u_\mathrm{rad})v =st\left( t \langle u_\mathrm{rad}, v \rangle_{H^1(A)} - t^{p-1} \int_{A}u_\mathrm{rad}^{p-1} v\,dx \right)\\
  &= st^2(1-t^{p-2})\int_{R_0}^{R_1}r^{N-1}u_\mathrm{rad}^{p-1}(r) \int_{\mathbb S^{N-1}}\mathfrak v(r,\cdot)\,d\sigma dr = 0 
\end{align*}
where we used assumption~(\ref{radial-weakly-stable-assumption-4}) in the last step. Consequently, $c_I \le I(u_*)<I(u_\mathrm{rad})$, as claimed in (\ref{eq:c-I-smaller-U-u-rad}).
\end{proof}

To find a function $v \in C^1_0(A)$ satisfying the assumptions of Proposition~\ref{radial-weakly-stable}, 
we take inspiration from \cite{gladiali-et-al2010}.

\begin{lemma}\label{lem:v_def}
Let $\alpha_1$ be the first eigenvalue of the one dimensional weighted eigenvalue problem
\begin{equation}
  \label{eq:radial-eigenvalue-problem}
  \left\{
    \begin{aligned}
      &-w_{rr} -\frac{N-1}{r}w_r +\bigl(1 - (p-1)u_\mathrm{rad}^{p-2}\bigr)w = \frac{\alpha}{r^2}w &\quad \text{in $(R_0,R_1)$,}\\
      &\quad w(R_0)=w(R_1)= 0,
   \end{aligned}
 \right.
\end{equation}
and let $w$ be the (up to normalization) unique positive corresponding eigenfunction.
Let then $Y(\theta):= 1-N \sin^2 \theta$, $\theta\in(-\pi/2,\pi/2)$ be the (up to sign and normalization) unique axially symmetric spherical harmonic of degree two.  If 
\begin{equation}\label{eq:alpha_1}
\alpha_1<-2N
\end{equation}
then $v=\mathfrak{v}(r,\theta)=w(r)Y(\theta)$ satisfies assumptions \eqref{radial-weakly-stable-assumption-1}--\eqref{radial-weakly-stable-assumption-4} in Proposition \ref{radial-weakly-stable}.
\end{lemma}
\begin{proof}
By construction, $v \in C^1_0(A)$ and satisfies assumptions \eqref{radial-weakly-stable-assumption-2} and \eqref{radial-weakly-stable-assumption-3} of Proposition~\ref{radial-weakly-stable}. Moreover, 
$$
\int_{\mathbb S^{N-1}} \mathfrak{v}(r,\cdot)\,d\sigma 
= w(r) \int_{\mathbb S^{N-1}}(1-N x_N^2)d\sigma(x) = w(r) \int_{\mathbb S^{N-1}}(1-|x|^2)d\sigma(x) = 0
$$
for every $r \in (R_0,R_1)$, so assumption \eqref{radial-weakly-stable-assumption-4} is also satisfied. It remains to prove \eqref{radial-weakly-stable-assumption-1}. To this aim, we recall that the function $Y$ is an eigenfunction of the Laplace-Beltrami operator $-\Delta_{\mathbb S^{N-1}}$ on the unit sphere $\mathbb S^{N-1}$ corresponding to the eigenvalue
$\lambda_2= 2N$. 
By using \eqref{eq:Laplace_radial}, it is straightforward to verify that 
\[
-\Delta v + v -(p-1)u_\mathrm{rad}^{p-2}v=\frac{\alpha_1+2N}{|x|^2} v \quad \mbox{in }A.
\]
By testing this equation by $v$ and integrating by parts, we obtain
\[
\int_A \left( |\nabla v|^2+v^2-(p-1)u_\mathrm{rad}^{p-2}v^2\right)\,dx =
(\alpha_1+2N) \int_A \frac{v^2}{|x|^2} \,dx <0,
\]
by assumption. Since the left hand side is $I''(u_\mathrm{rad})(v,v)$, the proof is concluded.
\end{proof}

\begin{proof}[Proof of Theorem~\ref{thm:main-a-const}]
By combining Proposition \ref{radial-weakly-stable} and Lemma \ref{lem:v_def}, it remains to prove the validity of relation \eqref{eq:alpha_1} under assumption \eqref{eq:suff_p}.

To this aim, notice that the eigenvalue $\alpha_1$ admits the variational characterization
    $$
    \alpha_1 = \min_{\varphi \in H^1_{0,\mathrm{rad}}(A) \setminus \{0\}} \frac{  \int_{A}\Bigl(|\nabla \varphi|^2+ \varphi^2\Bigr)\,dx - (p-1)\int_{A}u_{\mathrm{rad}}^{p-2}\varphi^2\,dx}{\int_{A}\frac{\varphi^2}{|x|^2}\,dx},
    $$
    where $H^1_{0,\mathrm{rad}}(A)$ denotes the subspace of radially symmetric functions in $H^1_0(A)$.
    Using in particular $\varphi = u_{\mathrm{rad}}$ as a test function, we obtain that 
\begin{align*}
  \alpha_1 &\le \frac{  \int_{A}\Bigl(|\nabla u_{\mathrm{rad}}|^2+ u_{\mathrm{rad}}^2\Bigr)\,dx - (p-1)\int_{A}u_{\mathrm{rad}}^{p}\,dx}{\int_{A}\frac{u_{\mathrm{rad}}^2}{|x|^2}\,dx}\\
  &=-(p-2)\frac{  \int_{A}\Bigl(|\nabla u_{\mathrm{rad}}|^2+ u_{\mathrm{rad}}^2\Bigr)\,dx}{\int_{A}\frac{u_{\mathrm{rad}}^2}{|x|^2}\,dx}.
\end{align*}
Since, 
$$
\int_{A}|\nabla u_{\mathrm{rad}}|^2\,dx \ge \Bigl(\frac{N-2}{2}\Bigr)^2  \int_{A}\frac{u_{\mathrm{rad}}^2}{|x|^2}\,dx
$$
by Hardy's inequality and
$$
\int_{A} u_{\mathrm{rad}}^2 \,dx > R_0^2 \int_{A}\frac{u_{\mathrm{rad}}^2}{|x|^2}\,dx,
$$
it follows that $\alpha_1 < -(p-2) \Bigl( \Bigl(\frac{N-2}{2}\Bigr)^2 +R_0^2\Bigr)$ and therefore $\alpha_1 < -2N$ by assumption \eqref{eq:suff_p}. Thus \eqref{eq:alpha_1} holds.
\end{proof}
    
\begin{remark}
It is proved in \cite[Proposition 4.5]{gladiali-et-al2010} that the first eigenvalue $\alpha_1$ of the eigenvalue problem~(\ref{eq:radial-eigenvalue-problem}) satisfies, as a function of the exponent $p>2$, the asymptotic expansion
\[
\alpha_1 = \alpha_1(p) = -c p^2 + o(p^2) \qquad \text{as $p \to \infty$ with a constant $c>0$}.
\]
This allows to conclude the weaker result that there exists $p_*>2$ such that $\alpha_1<-2N$ for every $p>p_*$.

Similarly,  as by \cite[Proposition 3.2]{gladiali-et-al2010} it holds
\[
\alpha_1 = \alpha_1(R) = -c R^2 + o(R^2) \qquad \text{as $R \to \infty$ with a constant $c>0$},
\]
being $A_R :=\{x\in\mathbb{R}^N\,:\, R<|x|<R+1\}$ an annulus with fixed width,  one obtains the existence of a nonradial solution on annuli with fixed width and sufficiently large radius $R$.
\end{remark}

\section*{Acknowledgments}
\noindent 
The authors acknowledge the support of the Departement of Mathematics of the University of Turin.
A. Boscaggin, F. Colasuonno and B. Noris were partially supported by the INdAM - GNAMPA Project 2019 ``Il modello di Born-Infeld per l'elettromagnetismo nonlineare: esistenza, regolarità e molteplicità di soluzioni'' and by the INdAM - GNAMPA Project 2020 ``Problemi ai limiti per l'equazione della curvatura media prescritta''. 
B. Noris acknowledges the support of the program S2R of the Université de Picardie Jules Verne, which financed a short visit to Turin, where part of this work has been achieved.
The authors also thank Susanna Terracini for helpful discussions.
\bibliographystyle{abbrv}
\def\cprime{$'$}

\end{document}